\documentclass[12pt]{article}
\usepackage{amsmath,amsfonts,amssymb,graphicx,url,a4}

\def\sach{\ |\ }
\def\passe{\lhd}

\def\un{\mathbf{1}}
\renewcommand\leq{\le}
\renewcommand\geq{\ge}

\def\R{{\mathbb R}}

\def\z{{\mathbb Z}}
\def\N{{\mathbb N}}

\def\P{{\mathbb P}}
\def\E{\mathbb{E}}
\def\1{{\mathbb 1}}
\def\F{{\mathcal{F}}}
\def\G{{\mathcal{G}}}

\newtheorem{theoreme}{Theorem}
\newtheorem{conjecture}{Conjecture}[section]
\newtheorem{lemme}[conjecture]{Lemma}
\newtheorem{proposition}[conjecture]{Proposition}
\newtheorem{corollaire}[conjecture]{Corollary}

\newtheorem{remarque}{Remark}

\newtheorem{definition}{Definition}
\newtheorem{notation}{Notation}

\makeatletter
\def\@yproof[#1]{\@proof{ #1}}
\def\@proof#1{\begin{trivlist}\item[]{\em Proof#1.}}
\newenvironment{proof}{\@ifnextchar[{\@yproof}{\@proof{}
}}{~\end{trivlist}} \makeatother

\title{Sufficient conditions of standardness for filtrations of stationary processes taking values in a finite space}
\author{Ga\"el Ceillier}
\date{\today}

\parskip 1ex
\parindent 0ex
\renewcommand{\le}{\leqslant}
\renewcommand{\ge}{\geqslant}

\begin{document}

\maketitle


Soon to be published in The Annals of probability.

\begin{abstract}
Let $X$ be a stationary process with finite state-space $A$. Bressaud et al. recently provided a sufficient condition for the natural filtration of $X$ to be standard when $A$ has size $2$. Their condition involves the conditional laws $p(\cdot|x)$ of $X_0$ conditionally on the whole past $(X_k)_{k \le -1}=x$ and controls the strength of the influence of the ``old'' past of the process on its present $X_0$. It involves the maximal gaps between $p(\cdot|x)$ and $p(\cdot|y)$ for infinite sequences $x$ and $y$ which coincide on their $n$ last terms. In this paper, we first show that a slightly stronger result holds for any finite state-space. Then, we provide sufficient conditions for standardness based on average gaps instead of maximal gaps.
\end{abstract}

\section{Introduction}

\subsection{Setting}

In this paper we study stationary processes $X=(X_n)_{n\in \z}$
indexed by the integer line $\z$ and with values in a finite
set $A$.  We
assume that $X$ is defined recursively as follows: for every $n \in
\z$, $X_{n}$ is a function of the ``past'' $X^{\passe}_{n-1}=(X_k)_{k\le n-1}$ of $X$ and
of a ``fresh'' random variable $U_{n}$, which brings
in some ``new''
randomness.  In particular the process $U=(U_n)_{n\in\z}$ is
independent. To be more specific, we introduce some notations and
definitions about $\sigma$-algebras.

All $\sigma-$fields are assumed to be complete.
For every process $\xi=(\xi_n)_{n\in\z}$ and every $n\in\z$, let $\xi^{\passe}_n=(\xi_k)_{k \le n}$ and
$\F^\xi_n=\sigma(\xi^{\passe}_n)$. The natural filtration of
$\xi$
is the nondecreasing sequence
$\F^\xi=(\F^\xi_n)_{n\in\z}$. Furthermore,
$\F^\xi_\infty=\sigma(\xi_k\,;\,k\in\z)$ and $\F^\xi_{-\infty}$ is
the
tail $\sigma$-algebra $\F^\xi_{-\infty}=\bigcap_{k\in\z}\F^\xi_k$.

We say that a process $U$ is a \textit{governing
  process\/} for $X$, or that $U$ \textit{governs\/} $X$ if, for every
$n \in \z$, (i) $U_{n+1}$ is independent of $\F^{X,U}_n$, and (ii)
$X_{n+1}$ is measurable with respect to $\sigma(U_{n+1})\vee\F^X_n$.
In particular any governing process is independent.
If moreover the $U_n$ are uniform on $[0,1]$, the process $(U,X)$ is -- according to Schachermayer's definition~\cite{schachermayer-certain} and up to a time reversal -- a \textit{parametrization\/} of the process $X$. 

Likewise, we say that a process $U$ is a \textit{generating process\/} for $X$, or that $U$ \textit{generates\/} $X$ if, for every $n \in \z$, $X_n$ is measurable with respect to $\F^U_n$. This is equivalent to the condition that
  $\F_n^X\subset \F_n^U$ for every $n \in \z$, a property which, from now on, we write as $\F^X \subset \F^U$.

One could be led to believe that when $\F^X_{-\infty}$ is trivial, any process governing $X$ generates $X$ as well. But, although notoriously used by Wiener and Kallianpur in~\cite{kallianpur-non} (not published but see a discussion in \cite{masani1966wiener}), this argument is false. As a simple counterexample, assume that $X$ is i.i.d., that every $X_n$ is uniform on $\{-1,1\}$, and set  $U_n=X_n X_{n-1}$ for every $n \in \z$.
Then $\F^X_{-\infty}$ is trivial and $U$ governs $X$ but $X_0$ is independent of $\F^U_\infty$ hence $U$ does not generate $X$.

Governing and generating processes are related to \textit{immersions\/} of filtrations.
Recall that the filtration $\F^X$ is \textit{immersed\/} in the filtration $\F^U$ if $\F^X\subset\F^U$ and if, for every $n \in\z$, $\F^X_{n+1}$ and $\F_n^U$ are independent conditionally on $\F^X_n$. Roughly speaking, this means that $\F_n^U$ gives no further information on $X_{n+1}$ than $\F^X_n$ does.
Equivalently, $\F^X$ is immersed in $\F^U$ if every $\F^X$-martingale is an $\F^U$-martingale. The following easy fact holds (see a proof in section~\ref{S4.2}).

\begin{lemme}
\label{immersion}
If $U$ is a governing and generating process for $X$, then $\F^X$ is immersed in $\F^U$.
\end{lemme}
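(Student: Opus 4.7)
The plan is to verify the two defining properties of immersion. The inclusion $\F^X \subset \F^U$ is precisely the generating hypothesis, so only the conditional independence requires work. Since $\F^X_{n+1}=\F^X_n\vee\sigma(X_{n+1})$ and $\F^X_n\subset\F^U_n$, it suffices to show, for every $n\in\z$, that $X_{n+1}$ is independent of $\F^U_n$ conditionally on $\F^X_n$.

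First I would use the governing property (ii) to write $X_{n+1}$ as a function of $U_{n+1}$ and the past of $X$. Namely, $X_{n+1}$ is measurable with respect to $\sigma(U_{n+1})\vee\F^X_n=\sigma(U_{n+1},X^{\passe}_n)$, so by the Doob--Dynkin lemma there exists a measurable map $g$ with $X_{n+1}=g(U_{n+1},X^{\passe}_n)$. The governing property (i) then guarantees that $U_{n+1}$ is independent of $\F^{X,U}_n$, hence of both $\F^U_n$ and $\F^X_n$, since the chain of inclusions $\F^X_n\subset\F^U_n\subset\F^{X,U}_n$ holds (the first by the generating hypothesis, the second by definition).

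The main step is then a freezing/independence computation. For any bounded measurable $f$, set
\[
h(x)=\E\bigl[f(g(U_{n+1},x))\bigr].
\]
The independence of $U_{n+1}$ from $\F^U_n$, combined with the fact that $X^{\passe}_n$ is $\F^X_n$-measurable and therefore $\F^U_n$-measurable, yields
\[
\E\bigl[f(X_{n+1})\sach\F^U_n\bigr]=h(X^{\passe}_n).
\]
Running the same freezing argument with the smaller $\sigma$-algebra $\F^X_n$ in the role of the conditioning algebra gives $\E[f(X_{n+1})\sach\F^X_n]=h(X^{\passe}_n)$ as well. Both conditional expectations thus coincide, which is precisely the desired conditional independence.

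There is no substantive obstacle here; the whole argument reduces to a standard application of the freezing lemma. The only care to take is in tracking the inclusions $\F^X_n\subset\F^U_n\subset\F^{X,U}_n$, so that the independence of $U_{n+1}$ from $\F^{X,U}_n$ can be legitimately transferred to conditioning with respect to the intermediate $\sigma$-algebra $\F^U_n$.
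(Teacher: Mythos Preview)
Your proof is correct and follows essentially the same route as the paper: write $X_{n+1}$ as a measurable function of $U_{n+1}$ and $X^{\passe}_n$, then use the independence of $U_{n+1}$ from the past $\sigma$-algebra together with a freezing argument to conclude that the conditional law of $X_{n+1}$ given $\F^U_n$ coincides with its conditional law given $\F^X_n$. The only cosmetic difference is that the paper conditions on $\F^{X,U}_n$ (where $X^{\passe}_n$ is trivially measurable) and then passes down to $\F^U_n$, whereas you condition directly on $\F^U_n$ and invoke the generating hypothesis to ensure $X^{\passe}_n$ is $\F^U_n$-measurable; both variants yield the same conclusion.
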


Another notable property of filtrations is \textit{standardness\/}. Recall that $\F^X$ is \textit{standard\/}
if, modulo an enlargement of the probability space, one can immerse $\F^X$ in a filtration generated by an i.i.d.\ process. Vershik introduced standardness in the context of ergodic theory.
Examples of non-standard filtrations include the filtrations of $[T,T^{-1}]$ transformations, introduced in~\cite{heicklen-t}. Split-word processes, inspired by Vershik's $(r_n)$-adic sequences of decreasing partitions~\cite{vershik1995tds} and studied in~\cite{smorodinsky1998pns} and \cite{laurent2004ftd}, for instance, also provide non-standard filtrations.

Obviously, lemma~\ref{immersion} above implies that if $X$ has a generating and governing process, then $\F^X$ is standard. Whether the converse holds is not known.

Necessary and sufficient conditions for standardness include Vershik's
self-joining criterion and Tsirelson's notion of $I$-cosiness. Both
notions are discussed in~\cite{émery2001vershik} and are based on conditions which are subtle
and not easy to use nor to check in specific cases.

Our goal in this paper is to provide sufficient conditions of standardness that are
easier to use than the ones mentioned above. Each of our conditions involves a measure of the influence of the ``old'' past
of the process on its present. We introduce them in the next section.

\subsection{Statement of the results}

We now introduce some measures of the
influence of the past of a process on its present.
To conveniently state these definitions and, later on, our results, we first introduce some notations.

Recall that $X$ is a stationary process indexed by the integer line $\z$ with values in some finite set
$A$ and with natural filtration $\F^X$.

\begin{notation}

(1) Slabs:
  For any sequence $(\xi_n)_{n\in\z}$ in $A^\z$, deterministic or random, and any integers $i\le
  j$, $\xi_{i:j}$ is the $(j-i+1)$-uple $(\xi_n)_{i\le n\le j}$ in
  $A^{j-i+1}$.
\\
(2) Shifts:
If $k-i=\ell-j$, $\xi_{i:k}=\zeta_{j:\ell}$ means
  that $\xi_{i+n}=\zeta_{j+n}$ for every integer $n$ such that $0\le
  n\le k-i$.
\\
Infinite case:  Let $A^{\passe}$ denote the space of sequences
  $(\xi_n)_{n\le-1}$.
For every $i$ in $\z$, a sequence $(\xi_n)_{n\le i}$ is also considered as an element of $A^{\passe}$ since, similarly to the finite case, one identifies $\xi^{\passe}_{i}=(\xi_n)_{n\le i}$ and $\zeta^{\passe}_{j}=(\zeta_n)_{n\le j}$ if $\xi_{i+n}=\zeta_{j+n}$ for every integer
  $n\le0$.
\\
(3) Concatenation: For all $i \ge 0$, $j \ge 0$, $ x=(x_n)_{1\le n\le i}$ in $A^i$ and $y=(y_n)_{1\le n\le j}$ in $A^j$,
$xy$ denotes the concatenation of $x$ and $y$, defined as
$$
xy=(x_{1},\ldots,,x_{i},y_1,\ldots,y_j),\quad xy\in A^{i+j}.
$$
Infinite case: $i\ge0$, $y=(y_n)_{1\le n\le i}$ in $A^i$ and
$x=(x_n)_{n\le-1}$ in $A^{\passe}$, $xy$ denotes the
concatenation of $x$ and $y$, defined as
$$
xy=(\ldots,x_{-2},x_{-1},y_1,\ldots,y_i),\quad xy\in A^{\passe}.
$$
\end{notation}

\begin{notation}
 For each $n\ge0$, $x \in A^n$ and $a\in A$, set
$$
p(a|x) =\P (X_0 = a\sach X_{-n:-1}=x),
$$
with the convention
$$
p(a|x) = \P(X_0=a) \quad \text{{\rm if }} \P[ X_{-n:-1}=x]=0 .
$$
In the following,
$$
p(\cdot|x) =\P (X_0 = \cdot \sach X^{\passe}_{-1}=x),\quad x \in A^{\passe},
$$
denotes a regular version of the conditional law of $X_0$ given $X_{-1}^{\passe}$.
\end{notation}

We now introduce three quantities $\gamma_n$, $\alpha_n$ and $\delta_n$ measuring the pointwise influence at distance $n$.

\begin{definition}
For every $n\ge0$, let
$$\begin{array}{l}
\gamma_n=1-\inf\left\{\displaystyle\frac{p(a|xz)}{p(a|yz)}\ ;\ a \in A,\ x\in A^{\passe},\ y \in A^{\passe},\ z\in A^n,\ p(a|yz)>0  \right\},\\
~\\
\alpha_n=1-\displaystyle\inf_{z \in A^n} \displaystyle\sum_{a \in A} \inf\left\{p(a|yz)\,;\,y \in A^{\passe}\right\},\\
~\\
\delta_n=\sup\big\{ \| p(\cdot|xz)-p(\cdot|yz) \|\ ;\ x\in A^{\passe},\ y\in A^{\passe},\ z\in A^n\big\},
\end{array}$$
where, for all probabilities $\mu$ and $\nu$ on $A$, $\|\mu-\nu\|$ is the
distance in total variation between $\mu$ and $\nu$, defined as
$$
\|\mu-\nu\|=\frac12\sum_{a \in A} |\mu(a)-\nu(a)|=\sum_{a \in A} \left[\mu(a)-\nu(a)\right]_+.
$$
\end{definition}


Note that the definitions of $\gamma_n$, $\alpha_n$ and $\delta_n$ depend on the choice of the regular version $(p(\cdot|x))_{x \in A^{\passe}}$ of the conditional law of $X_0$ given $X_{-1}^{\passe}$. One needs a ``good" version to get small influences for applying the theorems below.

The sequences $(\gamma_n)_{n\geq 0}$, $(\alpha_n)_{n\geq 0}$ and $(\delta_n)_{n\geq 0}$ are non-increasing, $[0,1]$-valued, and
$\delta_n \le
\gamma_n$, $\delta_n \le\alpha_n$ for every $n \geq 0$ (see the proof in section~\ref{S4.1}).

For every $[0,1]$-valued sequence $(\varepsilon_n)_{n\geq 0}$, we consider the condition
\begin{align}
\tag{$\mathcal{H}(\varepsilon)$}
\sum_{k=0}^{+\infty} \prod_{n=0}^{k}(1-\varepsilon_n)=+\infty.
\end{align}
For instance, $\mathcal{H}(\gamma)$ and $\mathcal{H}(2\delta)$ are respectively
$$
\sum_{k=0}^{+\infty} \prod_{n=0}^{k}(1-\gamma_n)=+\infty,
\quad\mbox{and}\quad
\sum_{k=0}^{+\infty} \prod_{n=0}^{k}(1-2\delta_n)=+\infty.
$$

Observe that if two $[0,1]$-valued sequences $(\varepsilon_n)_{n\geq 0}$ and $(\zeta_n)_{n\geq 0}$ are such that $\varepsilon_n \leq \zeta_n$ for every $n\geq 0$, then $\mathcal{H}(\zeta)$ implies $\mathcal{H}(\varepsilon)$. Hence condition $\mathcal{H}(\varepsilon)$ asserts that $(\varepsilon_n)_{n\geq 0}$ is "small enough" in a way.

The definition of $(\gamma_n)_{n\geq 0}$ and the assumption $\mathcal{H}(\gamma)$ are both stated in
\cite{bressaud2006stationary}. 
The main result of \cite{bressaud2006stationary} is the following.
\begin{theoreme}[X.Bressaud, A.Maass, S.Martinez and J.San Martin's~\cite{bressaud2006stationary}]
\label{theo gamma}
Assume that the size of $A$ is $2$, then $\mathcal{H}(\gamma)$ implies that
 $\F^X$ is standard.
\end{theoreme}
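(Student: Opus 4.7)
The plan is to construct an i.i.d.\ uniform process $U = (U_n)_{n \in \z}$ that both governs and generates $X$: by Lemma~\ref{immersion} this immerses $\F^X$ in the i.i.d.\ filtration $\F^U$, which is exactly the definition of standardness. Enlarging the probability space by an auxiliary i.i.d.\ uniform sequence $V$ independent of $X$, I would set $U_n = \sum_{a<X_n} p(a|X^{\passe}_{n-1}) + V_n\,p(X_n|X^{\passe}_{n-1})$ (using the ordering $A = \{0,1\}$); then $U$ is i.i.d.\ uniform on $[0,1]$ and the inverse-CDF rule $\Phi(x,u) = \un\{u < p(1|x)\}$ satisfies $X_n = \Phi(X^{\passe}_{n-1}, U_n)$, so $U$ governs $X$. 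It remains to show that $U$ also generates $X$, i.e., $X_0 \in \F^U_0$. By a standard conditioning argument, this reduces to the coupling statement that if $X^{(1)}$ and $X^{(2)}$ are two copies of $X$ driven by the same $U$ from time $-N+1$ onwards with independent stationary pasts at times $\le -N$, then $\P(X^{(1)}_0 \ne X^{(2)}_0) \to 0$ as $N \to \infty$.

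\textbf{Coupling bound via $\gamma$.} With $|A|=2$ and $q(x):=p(1|x)$, the coupling gives $\P(X^{(1)}_k \ne X^{(2)}_k \sach \mathrm{history}) = |q(X^{(1),\passe}_{k-1}) - q(X^{(2),\passe}_{k-1})|$. When the two pasts agree on their last $m$ symbols, applying the ratio bound $p(a|\cdot)/p(a|\cdot) \in [1-\gamma_m,(1-\gamma_m)^{-1}]$ to both $a=0$ and $a=1$ forces $|q(x)-q(y)| \le \gamma_m$. Let $M_k = \sup\{j \ge 0 : X^{(1)}_{k-j:k-1} = X^{(2)}_{k-j:k-1}\}$ be the backward match length at time $k$. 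Then $M_{k+1} = M_k + 1$ conditionally with probability at least $1 - \gamma_{M_k}$ given the full history, else $M_{k+1} = 0$.

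\textbf{Renewal analysis.} Introducing one auxiliary thinning variable at each step (the key point being that $(\gamma_n)$ is nonincreasing, so $\gamma_{M_k} \le \gamma_{\tilde M_k}$ whenever $M_k \ge \tilde M_k$), one couples $(M_k)$ with a genuine Markov chain $(\tilde M_k)$ on $\N$ started at $\tilde M_{-N+1}=0$ with transitions $m \to m+1$ with probability $1-\gamma_m$ and $m \to 0$ with probability $\gamma_m$, in such a way that $M_k \ge \tilde M_k$ for all $k$. An excursion of $\tilde M$ from $0$ lasts at least $m+1$ steps with probability $\prod_{n=0}^{m-1}(1-\gamma_n)$, so the expected excursion length equals $\sum_{m \ge 0} \prod_{n=0}^{m-1}(1-\gamma_n)$, which is infinite precisely under $\mathcal{H}(\gamma)$. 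Hence $\tilde M$ is either transient (when $\sum \gamma_n < \infty$, so $\tilde M_k \to \infty$ almost surely) or null recurrent (when $\sum \gamma_n = \infty$, where Feller's renewal theorem gives $\P(\tilde M_k = 0 \sach \tilde M_0 = 0) \to 0$, and then the identity $\P(\tilde M_k = m) = \P(\tilde M_{k-m}=0)\prod_{n<m}(1-\gamma_n) \to 0$ for each fixed $m$ yields $\tilde M_k \to \infty$ in probability). In either case $M_0 \to \infty$ in probability as $N \to \infty$; since $\mathcal{H}(\gamma)$ forces $\gamma_m \to 0$ (otherwise the products would decay geometrically and the series converge), we get $\P(X^{(1)}_0 \ne X^{(2)}_0) \le \E[\gamma_{M_0}] \to 0$, closing the generation claim.

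\textbf{Main obstacle.} The delicate point is the null recurrent regime: one must translate the infinite expected excursion length granted by $\mathcal{H}(\gamma)$ into genuine spreading of $\tilde M_k$ to infinity \emph{in probability}, rather than mere recurrence. This relies on Feller's renewal theorem for null recurrent aperiodic chains (or an equivalent direct asymptotic analysis of the renewal equation), together with aperiodicity, which is automatic in any nontrivial regime where $\gamma_0 \in (0,1)$. The other bookkeeping item is the careful construction of the thinning coupling producing $M_k \ge \tilde M_k$, for which the monotonicity of $(\gamma_n)$ is essential.
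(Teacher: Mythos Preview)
Your proposal is correct and follows the same strategy as the paper's proof of the stronger Theorem~\ref{theo delta} (the paper does not prove Theorem~\ref{theo gamma} separately, citing it from~\cite{bressaud2006stationary} and noting that its own Markov-chain comparison step ``follows the method in~\cite{bressaud2006stationary}''): build an i.i.d.\ governing sequence, bound the one-step coupling error by the total-variation gap, stochastically dominate the match-length process by the birth-or-reset chain on $\N$ with transitions $m\to m{+}1$ with probability $1-\gamma_m$ and $m\to0$ otherwise, and invoke the transient/null-recurrent dichotomy under $\mathcal{H}(\gamma)$. The only differences are cosmetic---your inverse-CDF map is exactly the $\#A=2$ instance of the paper's simplex construction (Section~\ref{S1.1}), and the paper compares $X$ with a process $X^T$ started from a constant past rather than with a second stationary copy, which makes the approximant manifestly $\F^U_0$-measurable and lets one conclude directly from $\P[X^T_0\ne X_0]=\P[L^T_0=0]\le\P[Z_{-T}=0]\to0$ instead of your detour through $\E[\gamma_{M_0}]\to0$.
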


The scope of theorem~\ref{theo gamma} is restricted by the following three conditions. First, the size of $A$ must be $2$. Second, one must control the \textit{ratios\/} of probabilities which define
$\gamma_n$. Third,
$\mathcal{H}(\gamma)$ implies that $\gamma_0 < 1$,
therefore one can show that $\mathcal{H}(\gamma)$ implies the existence of $c>0$ such that $p(a|x)\ge c$ for every $x$ in $A^{\passe}$ and $a$ in $A$ such that $\P[X_0=a]>0$ (see the proof in section~\ref{S4.4}).


Our first result allows to get rid of the first two restrictions.

\begin{theoreme}\label{theo delta}
  (1) Assume that $A$ is finite, that $2\delta_0<1$
and that
  $\mathcal{H}(2\delta)$ holds. Then $\F^X$ is standard.
\\
  (2) If the size of $A$ is $2$,  $\mathcal{H}(\delta)$ alone implies that $\F^X$ is standard.
\end{theoreme}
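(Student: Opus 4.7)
The plan is to construct an i.i.d.\ uniform $[0,1]$-valued process $U=(U_n)_{n\in\z}$ together with a stationary process $Y$ with the same law as $X$ such that $U$ both governs and generates $Y$. By Lemma~\ref{immersion}, $\F^Y$ will then be immersed in $\F^U$, and since $\F^U$ is generated by an i.i.d.\ process, this gives standardness of $\F^X$.

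The first step is to produce a measurable update map $\phi:A^{\passe}\times[0,1]\to A$ such that $\phi(x,U)\sim p(\cdot|x)$ for $U$ uniform, and such that any two pasts sharing their last $n$ terms yield outputs that rarely disagree: explicitly, for all $x,y\in A^{\passe}$ and $z\in A^n$,
$$
\P\bigl[\phi(xz,U)\neq\phi(yz,U)\bigr]\le c_A\,\|p(\cdot|xz)-p(\cdot|yz)\|\le c_A\,\delta_n,
$$
with $c_A=2$ for general finite $A$ and $c_A=1$ when $|A|=2$ (the latter via the inverse CDF coupling, since each CDF then has a single interior jump). For general $A$, the naive inverse-CDF coupling gives a constant of order $|A|$, and a refined measurable coupling (for instance via auxiliary Gumbel or common-reference randomness) is needed to reach $c_A=2$; this is where the factor $2$ of $\mathcal{H}(2\delta)$ appears.

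On an enlarged probability space carrying i.i.d.\ uniform $(U_n)_{n\in\z}$, for each deterministic $x\in A^{\passe}$ and integer $N\ge 0$, I iterate $Y^{(N,x)}_{n+1}=\phi(x\cdot Y^{(N,x)}_{-N+1:n},U_{n+1})$ from $n=-N$ up to $n=-1$. Comparing two starting pasts $x,y$, let $L^{(N)}_n$ be the length of the longest common suffix of $Y^{(N,x)}_{-N+1:n}$ and $Y^{(N,y)}_{-N+1:n}$; the coupling estimate yields $\P[L^{(N)}_{n+1}\ge L^{(N)}_n+1\mid\F^U_n]\ge 1-c_A\delta_{L^{(N)}_n}$, with $L^{(N)}_{n+1}=0$ otherwise. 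By monotonicity of $(\delta_n)$, a direct coupling shows that $L^{(N)}_0$ stochastically dominates $M_N$, where $(M_k)_{k\ge 0}$ is the Markov chain on $\N$ starting at $0$ with transitions $k\to k+1$ (probability $1-c_A\delta_k$) or $k\to 0$ (probability $c_A\delta_k$). A renewal computation gives the mean return time of $M$ to $0$ as $1+\sum_{k\ge 0}\prod_{j=0}^k(1-c_A\delta_j)$, infinite precisely under $\mathcal{H}(c_A\delta)$; combined with $c_A\delta_0<1$ (hypothesis in case~(1), automatic in case~(2)), $M$ is aperiodic and either null-recurrent or transient, so by the renewal theorem $u_N:=\P[M_N=0]\to 0$, and hence $\P[Y^{(N,x)}_0\neq Y^{(N,y)}_0]\le u_N\to 0$ for every pair $x,y$.

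Finally, for $N'>N$ the random variable $Y^{(N',x)}_0$ coincides with $Y^{(N,y)}_0$ for $y$ the random past produced by the $N'$-recursion at time $-N$, and this differs from $Y^{(N,x)}_0$ with probability at most $u_N$; this Cauchy-in-probability estimate defines an $\F^U_0$-measurable limit $Y_0$, and the same construction at every time yields a stationary $(Y_n)_{n\in\z}$ satisfying $Y_{n+1}=\phi(Y^{\passe}_n,U_{n+1})$. The same coalescence estimate gives uniqueness of the stationary law compatible with the kernel $p$, so $Y\stackrel{d}{=}X$; by construction $U$ governs $Y$ and $\F^Y\subset\F^U$, and Lemma~\ref{immersion} then gives standardness. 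Part~(2) is identical with $c_A=1$. I expect the main obstacle to be the existence of $\phi$ achieving the sharp constants $c_A$, together with the uniqueness-of-kernel step identifying $Y$ as a version of $X$.
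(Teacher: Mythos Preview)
Your proposal shares the two decisive ingredients with the paper: a coupling $\phi$ achieving $\P[\phi(x,U)\neq\phi(y,U)]\le c_A\|p(\cdot|x)-p(\cdot|y)\|$ with $c_A=2$ (resp.\ $c_A=1$ when $|A|=2$), and the comparison with the birth-or-catastrophe chain on $\N$ whose return time to $0$ has infinite mean exactly under $\mathcal{H}(c_A\delta)$. The organization, however, is reversed. You build a process $Y$ from fresh innovations $U$, take a limit, and then invoke uniqueness of the stationary law compatible with the kernel $p$ to identify $Y$ with $X$ in law. The paper instead enlarges the space carrying $X$ by an auxiliary i.i.d.\ sequence $W$ uniform on the simplex $S\subset\R^A$, and sets $U_n=f_{X_n}(W_n,p(\cdot|X^{\passe}_{n-1}))$, so that $U$ is i.i.d.\ on $S$, governs $X$, and satisfies $X_n=g(U_n,p(\cdot|X^{\passe}_{n-1}))$ where $g(u,p)$ is the index $a\in A$ minimizing $u_a/p(a)$. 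The approximations $X^T$ (started from a constant past at time $T$) are then functions of $U$ alone, and the coalescence/Markov-chain estimate gives $X^T_0\to X_0$ in probability directly---no uniqueness step, and no passage from an isomorphic copy $\F^Y$ back to $\F^X$. For the constant $c_A=2$ that you flag as the main obstacle, the paper's explicit device is to realise $U$ via $U_a=\varepsilon_a/\sum_b\varepsilon_b$ with i.i.d.\ exponential $(\varepsilon_a)_{a\in A}$, compute $\P[g(U,p)\neq g(U,q)]$ in closed form, and bound it by $2\|p-q\|$ (with exact value $\|p-q\|$ when $|A|=2$); this is precisely the Gumbel-type construction you allude to, and the paper also shows the constant $2$ is sharp for this family and essentially sharp for any universal family. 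Your route yields uniqueness of the $g$-measure as a free byproduct; the paper's route avoids that detour and delivers standardness of $\F^X$ itself without appealing to isomorphism of filtrations.
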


Theorem~\ref{theo delta} generalizes and improves on theorem~\ref{theo
  gamma} of \cite{bressaud2006stationary}, since $\delta_n \le
\gamma_n$ for every $n$.
Note that the straight adaptation of the proof of~\cite{bressaud2006stationary} to sizes of $A$ at least $3$
leads to the more stringent condition $\mathcal{H}(2\gamma)$.

Another measure of influence, based on the quantities $\alpha_n$ defined before, is introduced and used in~\cite{comets2002processes} (actually the notation there is $a_n=1-\alpha_n$). The authors show that if $\mathcal{H}(\alpha)$ holds, there exists a perfect sampling algorithm for the process $X$, a result which implies that $\F^X$ is standard.
But since $\delta_n \leq \alpha_n$ for every $n \geq 0$, the result of~\cite{comets2002processes} does not imply theorem~\ref{theo gamma}.

Theorems~\ref{theo gamma} and
\ref{theo delta} and the exact sampling algorithm of
\cite{comets2002processes} all require an upper bound of some pointwise
influence sequence.
Our next result uses a less restrictive
hypothesis based on some average influences $\eta_n$, defined below.


\begin{definition}
For every $n\ge0$, let $\eta_n$ denote the average influence at distance $n$, defined as
$$
\eta_n=\sum_{z \in A^{n}} \E \big[ \| p(\cdot|z)-p(\cdot|X^{\passe}_{-n-1}z) \|\big]\cdot\P[X_{-n:-1}=z],
$$
and call $\mathcal{H}'(\eta)$ the condition
\begin{align}
\tag{$\mathcal{H'}(\eta)$}
\sum_{k=0}^{+\infty} \eta_k<+\infty.
\end{align}
\end{definition}

Note that $\eta_n$ is also
$$
\eta_n= \E \big[\|p(\cdot|Y_{-n:-1})-p(\cdot|X^{\passe}_{-n-1}Y_{-n:-1})\|\big],
$$
where $Y$ is an independent copy of $X$.

\begin{definition}{\bf (Priming condition)}\label{priming condition}\\
We say that the process $X$ fulfills the priming condition if
for every $a$ in $A$,
  $p(a|X^{\passe}_{-1})>0$ almost surely.
\end{definition}

\begin{theoreme}\label{theo e}
  Assume that $A$ is finite and that $X$ fulfills the priming condition. Then, $\mathcal{H}'(\eta)$
  implies that $\F^X$ is standard.
\end{theoreme}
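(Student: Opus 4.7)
The plan is to verify I-cosiness of $\F^X$, which characterizes standardness for the class of filtrations considered here. By $L^2$-density of cylinder functions, it suffices to prove that for every $M\ge 1$, every bounded $f:A^M\to\R$ and every $\varepsilon>0$, one can build on an enlarged probability space two processes $X^{(1)}, X^{(2)}$ of the same law as $X$ whose natural filtrations are isomorphic to $\F^X$, independent at some sufficiently negative time $-N$, and satisfy $\|f(X^{(1)}_{-M+1:0})-f(X^{(2)}_{-M+1:0})\|_{L^2}<\varepsilon$.

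For the coupling, I would take $X^{(1)}=X$ together with an independent copy $Y$ of $X$, set $X^{(2)}_n = Y_n$ for $n \le -N$ (so that the two filtrations are independent at time $-N$), and for $n \in \{-N+1,\ldots, 0\}$ generate $X^{(2)}_n$ by a step-by-step maximal coupling with $X_n$ based on the conditional laws $p(\cdot|X^{(1),\passe}_{n-1})$ and $p(\cdot|X^{(2),\passe}_{n-1})$. The priming condition ensures that all these conditional laws have full support on the support of $X_0$, so that the maximal couplings are well-defined and each marginal retains the law of $X$.

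The core of the argument is to analyze the \emph{agreement length} $L_n = \max\{k\ge 0 : X^{(1)}_{n-k+1:n} = X^{(2)}_{n-k+1:n}\}$ (set to $0$ when $X^{(1)}_n \ne X^{(2)}_n$). On the event $\{L_0 \ge M\}$ the cylinder values $f(X^{(1)}_{-M+1:0})$ and $f(X^{(2)}_{-M+1:0})$ coincide, so the problem reduces to proving $\P(L_0 \ge M)\to 1$ as $N\to\infty$. The process $(L_n)$ behaves like a Markov chain with transitions $L_n \mapsto L_{n-1}+1$ with probability $1-\Delta_n$ and $L_n \mapsto 0$ otherwise, where $\Delta_n = \|p(\cdot|X^{(1),\passe}_{n-1})-p(\cdot|X^{(2),\passe}_{n-1})\|$. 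Applying the triangle inequality with the reference law $p(\cdot|z)$ (where $z\in A^k$ is the currently agreed recent past, of length $k=L_{n-1}$), the failure probability at step $n$ splits into two terms: the term involving $Y^{\passe}$ equals $\eta_k$ by stationarity and the very definition of $\eta$, while the term involving the true past $X^{\passe}$ is controlled by a backward-martingale/conditional-Jensen argument based on the identity $p(\cdot|X_{-k:-1}) = \E[p(\cdot|X^{\passe}_{-1})\mid X_{-k:-1}]$, also up to a multiple of $\eta_k$.

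Under $\mathcal{H}'(\eta)$, the product $\prod_{k\ge 0}(1-c\eta_k)$ is strictly positive, so starting from state $0$ the chain $(L_n)$ has a uniformly positive chance of reaching any prescribed level $M$ and of never returning below $M$ thereafter. A renewal-type estimate over the $N$ available steps then forces $\P(L_0 \ge M) \to 1$, completing the argument. The main obstacle is the bound on the ``correlated'' triangle-inequality term: the joint distribution of $(X^{\passe}_{n-k-1}, z)$ under the conditioning $\{L_{n-1}=k\}$ is neither the product measure appearing in the definition of $\eta_k$ nor the marginal law of $X^{\passe}_{-1}$, and handling this requires using the priming condition to prevent the conditional distribution of $X^{\passe}_{n-k-1}$ from degenerating; this is the step that makes the priming hypothesis essential for Theorem~\ref{theo e}.
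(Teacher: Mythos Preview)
Your overall strategy via I-cosiness is reasonable, but the gap you flag at the end is real and not merely a technicality: neither triangle-inequality term is actually controlled by $\eta_k$ in your coupling. Recall that
\[
\eta_k=\E\bigl[\|p(\cdot|Y_{-k:-1})-p(\cdot|X^{\passe}_{-k-1}Y_{-k:-1})\|\bigr]
\]
with $Y$ \emph{independent} of $X$: the agreed word $z$ must be drawn from the law of $X_{-k:-1}$ \emph{independently} of the far past it is concatenated with. In your maximal coupling, on $\{L_{n-1}=k\}$ the agreed block $z=X^{(1)}_{n-k:n-1}=X^{(2)}_{n-k:n-1}$ is the recent past of \emph{both} processes, so for the $X^{(2)}$-term the joint law of $(z,X^{(2),\passe}_{n-k-1})$ is (a conditioning of) the true joint law of $(X_{-k:-1},X^{\passe}_{-k-1})$, not the product law. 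The corresponding expectation is $\E\bigl[\|p(\cdot|X_{-k:-1})-p(\cdot|X^{\passe}_{-1})\|\bigr]$, which is neither $\eta_k$ nor obviously dominated by a constant multiple of it. The same objection applies to the $X^{(1)}$-term, and the extra conditioning on $\{L_{n-1}=k\}$ (which forces $X^{(1)}_{n-k-1}\ne X^{(2)}_{n-k-1}$) biases things further. Your appeal to the priming condition to ``prevent degeneration'' does not yield a quantitative bound here.

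The paper circumvents this correlation problem by a different construction, and this is where the priming condition actually does the work. Rather than coupling two copies of $X$, it builds a single governing i.i.d.\ sequence $U$ (valued in the simplex $S$) and an approximation $X'$ whose initial block on an interval $J=[s,t]$ is an \emph{independent} copy $Y$ of $X_J$, after which $X'_n=g(U_n,p(\cdot|X'_{s:n-1}))$. Because $Y$, $U_{t+1:n-1}$ and $X^{\passe}_{s-1}$ are mutually independent and $X'_{s:n-1}$ has the law of $X_{s:n-1}$, the error term factors exactly as $\beta_\ell\cdot\eta_{n-s}$ (Lemma~\ref{lemme maj erreur partie 2}). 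The priming condition enters only through the Priming Lemma~\ref{lemme amorce}: it provides Borel sets $B_x\subset S^\ell$ of common mass $\beta_\ell>0$ such that $\{U_J\in B_Y\}$ forces $X_J=Y$ with high probability, allowing the approximation to be ``seeded'' by the independent $Y$ while remaining measurable in $\F^U$. Repeating this over a sequence of disjoint intervals $J_k$ and applying Borel--Cantelli gives $X_0\in\F^U_0$, hence $\F^X$ is immersed in the i.i.d.\ filtration $\F^U$. The decoupling you need is thus achieved by design, not by an a posteriori estimate.
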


The sequence $(\eta_n)_{n \geq 0}$ is $[0,1]$-valued.
If $\eta_n<1$ for every $n \leq 0$, then $\mathcal{H}'(\eta)$
clearly implies $\mathcal{H}(\eta)$.
Yet, since $\eta_n\le \delta_n$ for every $n \geq 0$ (see the proof in section~\ref{S4.1}), the condition $\mathcal{H}'(\eta)$ cannot be compared to the conditions $\mathcal{H}(\delta)$ and $\mathcal{H}(2\delta)$.

Theorem~\ref{theo e} gives a remarkable result for chains with memory of variable length. These chains, studied notably in~\cite{galves2008stochastic} and widely used for mathematical models, are stationary processes $X$ taking values in a finite alphabet $A$, such that the distribution of $X_0$ given the past $X^{\passe}_{-1}$ depends only on a past $X_{-\ell:-1}$ of length $\ell$, where $\ell$ is random and measurable with respect to $\F^X_{-1}$.

More precisely, for $x \in A^\passe$, let
\begin{eqnarray*}
\ell(x)&=&\inf\{n \geq 0\,;\,y\mapsto p(\cdot|yx_{-n:-1})\ \mbox{is constant on}\ A^\passe\}\\
&=&\inf\{n \geq 0\,;\,\forall y \in A^\passe,\,p(\cdot|yx_{-n:-1})=p(\cdot|x)\}.
\end{eqnarray*}

Then $X$ is a variable length Markov chain if $\ell(X^\passe_{-1})$ is almost surely finite.
The following result holds.

\begin{corollaire}\label{memoire variable}
If $X$ fulfills the priming condition and if $\ell(X^\passe_{-1})$ is integrable, then the natural filtration $\F^X$ is standard.
\end{corollaire}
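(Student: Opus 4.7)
The plan is to apply Theorem~\ref{theo e}. Since the priming condition is already part of the hypothesis, I only need to verify $\mathcal{H}'(\eta)$, namely $\sum_{n\ge 0} \eta_n < +\infty$. I intend to deduce this from the integrability of $\ell(X^{\passe}_{-1})$ via the layer-cake identity $\E[\ell(X^{\passe}_{-1})] = \sum_{n\ge 0}\P[\ell(X^{\passe}_{-1})>n]$, by proving the pointwise bound $\eta_n \le \P[\ell(X^{\passe}_{-1})>n]$ for every $n\ge 0$.

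The first key step is a structural observation: the event $\{\ell(x)\le n\}$ depends only on $x_{-n:-1}$. Indeed, by definition $\ell(x)\le n$ means that the map $y \in A^{\passe} \mapsto p(\cdot|y x_{-n:-1})$ is constant on $A^{\passe}$, a property that involves only the finite word $z := x_{-n:-1}$. Hence there exists a function $L : A^n \to \{0,1\}$ with $\{\ell(x) \le n\} = \{L(x_{-n:-1}) = 1\}$.

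The second step is to check that on $\{L(z)=1\}$ the infinite-past conditional $p(\cdot|xz)$ agrees with the finite-past conditional $p(\cdot|z)$ for every $x \in A^{\passe}$. When $L(z)=1$, the map $x \mapsto p(\cdot|xz)$ takes a common value $\mu$, and the tower identity
$$
p(\cdot|z) = \E\bigl[p(\cdot|X^{\passe}_{-n-1}z) \bigm| X_{-n:-1} = z\bigr]
$$
forces $\mu = p(\cdot|z)$. Consequently, as soon as $L(Y_{-n:-1})=1$ the integrand in the expression of $\eta_n$ vanishes, so
$$
\eta_n \le \P[L(Y_{-n:-1})=0] = \P[L(X_{-n:-1})=0] = \P[\ell(X^{\passe}_{-1}) > n],
$$
where the middle equality uses that $Y$ has the same law as $X$. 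Summing over $n$ gives $\sum_{n\ge 0}\eta_n \le \E[\ell(X^{\passe}_{-1})] < +\infty$, so $\mathcal{H}'(\eta)$ holds and Theorem~\ref{theo e} yields the standardness of $\F^X$.

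No serious obstacle is anticipated. The only delicate point is that the tower identity is stated under $\P[X_{-n:-1}=z]>0$; on the complementary set the convention fixes $p(\cdot|z) = \P(X_0 = \cdot)$, but this set is $\P$-null for $X_{-n:-1}$ and hence irrelevant to $\eta_n$, so the argument is unaffected.
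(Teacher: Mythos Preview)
Your proposal is correct and follows essentially the same route as the paper's proof: bound the integrand in $\eta_n$ by the indicator of $\{\ell>n\}$, take expectations to get $\eta_n\le\P[\ell(X^{\passe}_{-1})>n]$, and sum using the layer-cake identity to obtain $\sum_n\eta_n\le\E[\ell(X^{\passe}_{-1})]$, then invoke Theorem~\ref{theo e}. You are a bit more explicit than the paper about why $\{\ell(x)\le n\}$ depends only on $x_{-n:-1}$ and why the tower property forces $p(\cdot|z)=p(\cdot|xz)$ on $\{L(z)=1\}$, but these are the same steps the paper uses implicitly.
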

Once again we refer the reader to section~\ref{S4.3} for the proof.

Here is a plan of the rest of the paper. In section \ref{S1}, we prove
theorem \ref{theo delta}. In section \ref{S2}, we prove
theorem \ref{theo e}. In section~\ref{S3}, we  compare
theorems \ref{theo delta} and \ref{theo e} through examples. Finally in section~\ref{ss.profs},
we prove some facts stated without proof in the introduction, namely lemma~\ref{immersion}, corollary~\ref{memoire variable}, a consequence of the assumption $\mathcal{H}(\gamma)$ and some inequalities involving the quantities $\alpha_n,$ $\gamma_n$, $\delta_n$ and $\eta_n$.
\section{Pointwise influence}\label{S1}


\subsection{Construction of a governing sequence}\label{S1.1}

We construct a governing sequence with values in the standard simplex on $\#A$ vertices.

\begin{notation}
Let $H$ be the hyperplane in $\R^A$ defined by
$$
H=\{x=(x_a)_{a\in A} \in \R^A:\sum_{a\in A}x_a=1\}.
$$
Let $S$ be the simplex in $H$ defined by
$$
S:=(\R_+)^A\cap H = \left\{x \in (\R_+)^A\,:\,\sum_{a\in A}x_a=1 \right\}.
$$
In other words, $S={\rm Conv}(E_A)$ is the convex enveloppe of the canonical basis $E_A=(E_a)_{a\in A}$ of $\R^A$.
\\
Let $\lambda$ denote the Lebesgue measure on $H$ and $\mu=(\un_{S}/\lambda(S))\lambda$ the uniform distribution on $S$.
\end{notation}

\begin{notation}
For any probability $p$ on $A$, let
$$
G(p)=\big(p(a)\big)_{a\in A}=\sum_{a\in A}p(a)\,E_a,\quad G(p)\in S.
$$
For $a$ in $A$, denote by $f_a(\cdot ,p)$ the affine map from $H$ to $H$ which sends
$E_a$ on $G(p)$ and lets invariant $E_b$ for every $b$ in $A$, $b\ne a$.
Let
$$
S_a(p)=f_a(S,p)={\rm Conv}\Big(\{G(p)\}\cup E_A\setminus\{E_a\}\Big).
$$
\end{notation}

A short computation yields the interpretation of $p(a)$ below.

\begin{lemme}
\label{ll.det}
For any $a$ in $A$, $\det (f_a(\cdot,p))=p(a)$. Therefore, for any measurable $B\subset S$,
$$
\lambda[f_a(B,p)]=\lambda[B]\,p(a).
$$
In particular $\lambda(S_a(p))=\lambda(S)\,p(a)$, hence $\mu(S_a(p))=p(a)$.
\end{lemme}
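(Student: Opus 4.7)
The plan is a direct change-of-variables computation in a well-chosen basis of the hyperplane $H$. Since $f_a(\cdot, p)$ is an affine map from $H$ to $H$, I would first fix $E_a$ as the origin of $H$ and use $\mathcal{B} = (E_b - E_a)_{b \in A, b \neq a}$ as a basis of the direction space of $H$. Because $f_a$ fixes $E_b$ for $b \neq a$ and sends $E_a$ to $G(p)$, the linear part $L$ of $f_a$ (with respect to this origin) acts on the basis by
\[
L(E_b - E_a) = f_a(E_b) - f_a(E_a) = E_b - G(p), \qquad b \neq a.
\]

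Next, I would rewrite each vector $E_b - G(p)$ in the basis $\mathcal{B}$. Since $\sum_{c \in A} p(c) = 1$,
\[
E_b - G(p) = \sum_{c \in A} p(c)(E_b - E_c) = (1 - p(b))(E_b - E_a) - \sum_{c \neq a, c \neq b} p(c)(E_c - E_a).
\]
Reading off coordinates, the matrix $M$ of $L$ in the basis $\mathcal{B}$ has entries $M_{cb} = \delta_{cb} - p(c)$ for $b, c \in A \setminus \{a\}$. Equivalently, $M = I - q\,\mathbf{1}^T$, where $q \in \R^{A \setminus \{a\}}$ is the column vector with entries $(p(c))_{c \neq a}$ and $\mathbf{1}$ is the all-ones column vector of the same size.

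Then the matrix determinant lemma (or a one-line direct expansion) yields
\[
\det M = 1 - \mathbf{1}^T q = 1 - \sum_{c \neq a} p(c) = p(a),
\]
which is the claimed determinant $\det(f_a(\cdot, p)) = p(a)$. Since $p(a) \geq 0$, the standard affine change-of-variables formula on $H$ gives $\lambda[f_a(B, p)] = p(a)\,\lambda[B]$ for every Borel $B \subset H$. Taking $B = S$ yields $\lambda(S_a(p)) = p(a)\,\lambda(S)$, and dividing by $\lambda(S)$ gives $\mu(S_a(p)) = p(a)$.

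There is no serious obstacle here: the only real choice is to base the computation at $E_a$ so that $L$ takes the rank-one-perturbation form $I - q\,\mathbf{1}^T$, at which point the determinant drops out immediately and the volume statements are straightforward consequences of the change-of-variables formula.
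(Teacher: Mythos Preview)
Your proof is correct and complete. The paper does not actually give a proof of this lemma: it simply states ``A short computation yields the interpretation of $p(a)$ below'' and moves on. Your argument is precisely a clean execution of that short computation---choosing $E_a$ as origin so that the linear part of $f_a(\cdot,p)$ becomes a rank-one perturbation of the identity in the basis $(E_b-E_a)_{b\ne a}$, and reading off the determinant via the matrix determinant lemma---so there is nothing to compare beyond noting that you have filled in what the paper omits.
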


We now characterize $S_a(p)$.

By convention, for every $r>0$, we set $r/0=\infty$, and $0/0=0$.
\begin{lemme} \label{S_a=sigma}
Let $p$ be a probability on $A$, $a$ in $A$, then
$$
S_a(p)=\left\{x=(x_a)_{a \in A}\in S\,:\,\frac{x_a}{p(a)}=\min_{b\in A}\frac{x_b}{p(b)}\right\}=\left\{x\in S\,:\,\forall bb \in A, \frac{x_a}{p(a)}\leq{b\in A}\frac{x_b}{p(b)}\right\}.
$$

\end{lemme}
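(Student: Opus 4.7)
The plan is to translate the definition $S_a(p) = f_a(S, p)$ into an explicit coordinate description on the simplex and then match it against the minimum-ratio characterization.

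First I would compute $f_a(\cdot,p)$ in coordinates. Since $f_a(\cdot,p)$ is the unique affine map sending $E_a$ to $G(p) = \sum_b p(b) E_b$ and fixing $E_b$ for $b \ne a$, any $y = \sum_c y_c E_c \in S$ satisfies
\begin{equation*}
f_a(y,p) = y_a\,G(p) + \sum_{c \ne a} y_c E_c,
\end{equation*}
so that the coordinates of $x = f_a(y,p)$ read $x_a = y_a\,p(a)$ and $x_b = y_b + y_a\,p(b)$ for $b \ne a$.

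For the inclusion $S_a(p) \subseteq T_a(p)$ (denoting by $T_a(p)$ the right-hand side of the claimed identity), I would start from $x = f_a(y,p)$ with $y \in S$. Assuming first that all $p(b) > 0$, the formulas above give $x_a/p(a) = y_a$ and, for $b \ne a$, $x_b/p(b) = y_a + y_b/p(b) \geq y_a$, so $x_a/p(a)$ realises the minimum and $x \in T_a(p)$. For the reverse inclusion, given $x \in T_a(p)$, I would construct the preimage explicitly by setting $y_a := x_a/p(a)$ and $y_b := x_b - y_a\,p(b)$ for $b \ne a$. The minimality assumption $x_a/p(a) \leq x_b/p(b)$ is exactly what yields $y_b \geq 0$; a direct summation gives $\sum_b y_b = y_a p(a) + \sum_{b \ne a} x_b = x_a + \sum_{b \ne a} x_b = 1$, so $y \in S$; and substituting back recovers $f_a(y,p) = x$, so that $x \in S_a(p)$.

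The main obstacle is handling the degenerate coordinates where $p(b) = 0$ for some $b \in A$, where one must justify that the conventions $r/0 = \infty$ and $0/0 = 0$ preserve both implications. For $b \ne a$ with $p(b)=0$, the ratio $x_b/p(b)$ equals $\infty$ whenever $x_b > 0$ (which imposes no restriction) and equals $0$ when $x_b = 0$ (forcing $x_a/p(a) = 0$ in the minimality condition, which is consistent with $y_b = x_b - y_a\,p(b) = 0$); for $b = a$ with $p(a) = 0$, every $x \in S_a(p)$ lies on the face $\{x_a = 0\}$ and has $x_a/p(a) = 0$, which is automatically the minimum. Checking these few cases is routine but is the one place where one must be careful, and it completes the proof of both inclusions.
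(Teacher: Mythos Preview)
Your argument is correct and matches the paper's: both exhibit the same barycentric decomposition $x = (x_a/p(a))\,G(p) + \sum_{b\ne a}\bigl(x_b - p(b)\,x_a/p(a)\bigr)E_b$ and use the minimum-ratio hypothesis to verify nonnegativity of the coefficients. The only cosmetic differences are that for the easy inclusion $S_a(p)\subset\Sigma_a(p)$ the paper invokes convexity of $\Sigma_a(p)$ (it contains $G(p)$ and the $E_b$, $b\ne a$) rather than your direct coordinate computation, and you handle the degenerate cases $p(b)=0$ more explicitly than the paper does.
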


\begin{corollaire} $S$ is the union of the simplices $S_a(p)$, with $a$ in $A$ and that, if $a\ne b$, the simplices $S_a(p)$ and $S_b(p)$ meet only at their boundary.
\end{corollaire}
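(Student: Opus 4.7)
The plan is to derive both claims directly from the characterization of $S_a(p)$ given in Lemma~\ref{S_a=sigma}, namely
$$S_a(p)=\left\{x\in S\,:\,\frac{x_a}{p(a)}=\min_{b\in A}\frac{x_b}{p(b)}\right\}.$$

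\textbf{Union.} Given any $x\in S$, the function $b\mapsto x_b/p(b)$ (with the $r/0=\infty$, $0/0=0$ conventions) takes finitely many values, so the minimum is attained at some $a\in A$. By the lemma above, $x\in S_a(p)$. Hence $S\subset\bigcup_{a\in A}S_a(p)$, and the reverse inclusion is immediate.

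\textbf{Intersection on boundaries.} I would fix $a\ne b$ and show that every point $x\in S_a(p)\cap S_b(p)$ lies on the (relative) boundary of $S_a(p)$ in $H$. Since $S_a(p)={\rm Conv}(\{G(p)\}\cup\{E_c\,:\,c\ne a\})$, every point of $S_a(p)$ can be written uniquely as $x=t\,G(p)+\sum_{c\ne a}s_c\,E_c$ with $t,s_c\geq0$ and $t+\sum_{c\ne a}s_c=1$, and the relative interior of $S_a(p)$ corresponds to $t>0$ and all $s_c>0$. For such a point one computes $x_a=t\,p(a)$ and $x_c=t\,p(c)+s_c$ for $c\ne a$, so $x_a/p(a)=t$ while $x_c/p(c)=t+s_c/p(c)>t$ whenever $p(c)>0$ and $s_c>0$ (the convention $r/0=\infty$ handles $p(c)=0$). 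Hence on the relative interior of $S_a(p)$ the minimum in the description of $S_a(p)$ is attained \emph{only} at $a$, which forces $x\notin S_b(p)$ for $b\ne a$. Equivalently, any point of $S_a(p)\cap S_b(p)$ with $a\ne b$ satisfies $x_a/p(a)=x_b/p(b)=\min_c x_c/p(c)$, hence at least one of the coordinates $s_c$ in the decomposition above vanishes, placing $x$ on the boundary.

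\textbf{Main obstacle.} The only delicate point is the bookkeeping around vanishing probabilities $p(c)=0$: when $p(c)=0$, the face $S_c(p)$ is itself degenerate (lower-dimensional) and the $r/0=\infty$, $0/0=0$ conventions must be used consistently to keep the argument formal. Once one verifies that these conventions are compatible with the barycentric description of $S_a(p)$ (points of $S_a(p)$ automatically have $x_c=0$ whenever $p(c)=0$ and $c\ne a$, since $E_c$ still lies in $S_a(p)$ but the face generated by $\{G(p)\}\cup\{E_d:d\ne a,c\}$ is where $x$ actually sits), the proof reduces to the elementary affine computation above.
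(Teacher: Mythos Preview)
Your proof is correct, but it takes a different route from the paper's. The paper derives the corollary from Lemmas~\ref{ll.det} \emph{and}~\ref{S_a=sigma} together: the union part is as you argue, while for the intersection the intended argument is a measure count, namely $\sum_{a\in A}\mu(S_a(p))=\sum_{a\in A}p(a)=1=\mu(S)$ by Lemma~\ref{ll.det}, so since the $S_a(p)$ already cover $S$ their pairwise overlaps must be $\lambda$-null, which for full-dimensional simplices forces disjoint (relative) interiors. You bypass Lemma~\ref{ll.det} entirely and instead verify directly, via the barycentric decomposition $x=t\,G(p)+\sum_{c\ne a}s_c E_c$, that on the relative interior of $S_a(p)$ the minimum $\min_c x_c/p(c)$ is attained \emph{only} at $a$. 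The paper's argument is shorter; yours is more explicit and pinpoints exactly which facet contains the intersection.

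One small correction: your parenthetical assertion that ``points of $S_a(p)$ automatically have $x_c=0$ whenever $p(c)=0$ and $c\ne a$'' is false (take $x=E_c$ itself). Fortunately this remark is unnecessary: you already dealt with $p(c)=0$ correctly two lines earlier, since $s_c>0$ gives $x_c=s_c>0$ and hence $x_c/p(c)=\infty>t$ by the convention in force. The degenerate case $p(a)=0$ is also harmless, because then $S_a(p)$ lies in the facet $\{x_a=0\}$ of $S$ and has empty interior in $H$, so there is nothing to check from that side.
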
 Proof : it's a straight corollary of the lemmas~\ref{ll.det} and~\ref{S_a=sigma}.

\begin{proof}
Call $\Sigma_a(p)$ the right-hand side. Since $\Sigma_a(p)$ is a convex polyedron and contains the points $G(p)$ and $E_b$ for every $b\ne a$,
$S_a(p) \subset \Sigma_a(p)$.

As regards the other inclusion, let $x=(x_a)_{a\in A}$ in $\Sigma_a(p)$. Then $x_a/p(a)$ is finite, and
$$
x=\frac{x_a}{p(a)} G(p)+\sum_{b \ne a} \left(x_b-p(b)\frac{x_a}{p(a)}\right)E_b.
$$
From the definition of $\Sigma_a(p)$, $x_b/p(b)\ge x_a/p(a)$ for every $b\ne a$, hence one has $x_b-p(b)x_a/p(a)\ge0$ for every $b\ne a$.
Furthermore,
$$
\frac{x_a}{p(a)}+\sum_{b \ne a}\left( x_b-p(b)\frac{x_a}{p(a)}\right)=\frac{x_a}{p(a)}+\sum_{b\in A}\left( x_b-p(b)\frac{x_a}{p(a)}\right)
=\sum_{b\in A}x_b=1,
$$
hence $x$ is indeed a barycenter of the points $G(p)$ and $E_b$ for $b\ne a$. This concludes the proof.
\hfill $\square$ \end{proof}

One knows that the simplices $(S_a(p))_{a\in A}$ cover $S$ and intersect only on a set of measure zero. Hence, for almost every $s$ in $S$, there exists a unique $a$ in $A$ such that $s \in S_a(p)$. Our next definition deals with the tie cases.

\begin{definition}
Fix once and for all a total ordering of $A$. For every $s$ in $S$ and every probability $p$ on $A$ with full support, define
$$
g(s,p)=\min\{a\in A\,:\,s \in S_a(p)\}.
$$
\end{definition}

\eject
\newpage

\begin{figure}[h]
\begin{center}
\includegraphics[width=.95\textwidth]{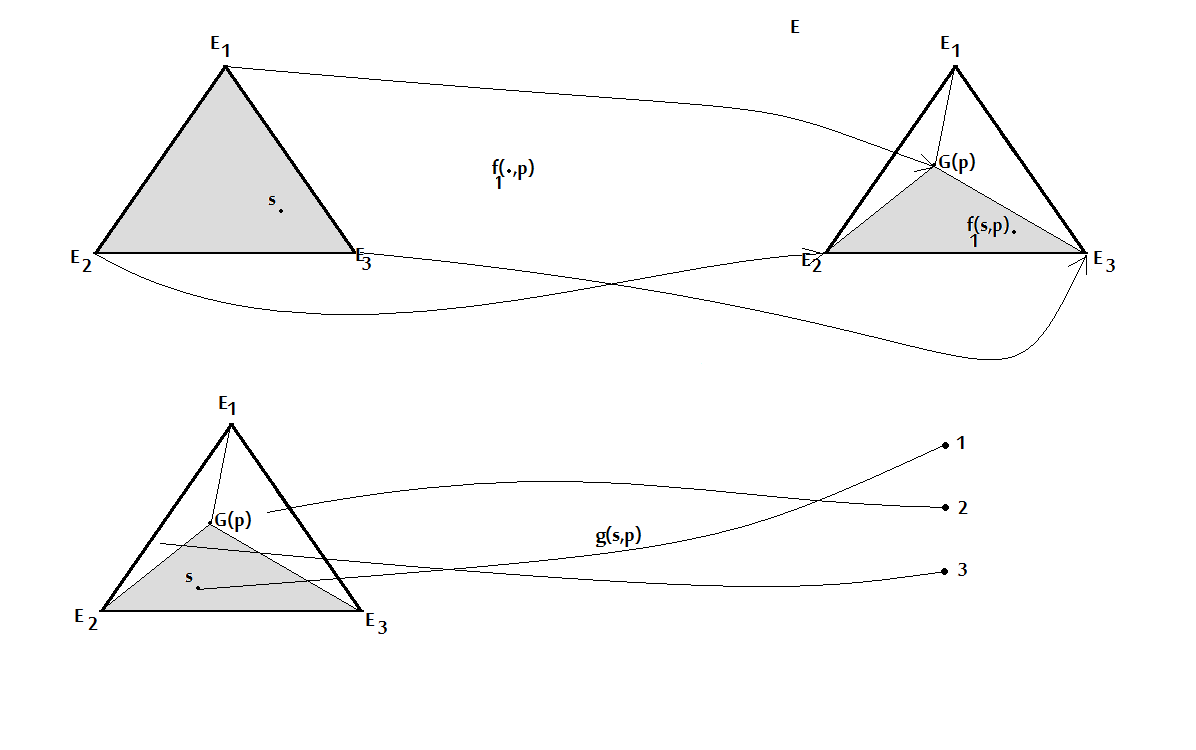}
\caption{$f(\cdot,p)$ and $g(\cdot,p)$}
\end{center}
\label{Graphique innovations}
\end{figure}

\begin{lemme}
Let $U$ denote a random variable uniformly distributed on $S$. Then the distribution of $g(U,p)$ is $p$.
\end{lemme}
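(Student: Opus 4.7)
The plan is to reduce the statement to an application of the two results already established about the sub-simplices $S_a(p)$: namely, $\mu(S_a(p)) = p(a)$ (Lemma~\ref{ll.det}) and the fact that the family $(S_a(p))_{a \in A}$ covers $S$ with pairwise intersections of Lebesgue measure zero (the corollary following Lemma~\ref{S_a=sigma}). Once we know the tie set on which two distinct $S_a(p)$'s meet is $\mu$-null, the tie-breaking rule built into $g(\cdot,p)$ does not affect the distribution, and we can read off $\P[g(U,p) = a]$ directly as $\mu(S_a(p))$.

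Concretely, I would first set $T_a(p) = S_a(p) \setminus \bigcup_{b < a} S_b(p)$, so that the sets $(T_a(p))_{a \in A}$ are pairwise disjoint and, by the definition of $g(\cdot,p)$ as the minimum index, satisfy
$$
\{g(U,p) = a\} = \{U \in T_a(p)\}
$$
for every $a \in A$. Second, since the pairwise intersections $S_a(p) \cap S_b(p)$ (for $a \ne b$) lie on the common boundary of two $(\#A-1)$-dimensional simplices in $H$ and hence are $\lambda$-null, we get $\mu(T_a(p)) = \mu(S_a(p))$. Combining this with Lemma~\ref{ll.det}, which yields $\mu(S_a(p)) = p(a)$, gives
$$
\P[g(U,p) = a] = \mu(T_a(p)) = p(a),
$$
which is the claim.

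The only potentially delicate point, and the one I would state carefully rather than assume, is the justification that $S_a(p) \cap S_b(p)$ has $\lambda$-measure zero when $a \ne b$. Here one uses the characterization of Lemma~\ref{S_a=sigma}: a point $x$ lies in this intersection iff both $x_a/p(a)$ and $x_b/p(b)$ achieve the minimum $\min_{c\in A} x_c/p(c)$, so in particular $x_a/p(a) = x_b/p(b)$, an affine equation that cuts out a set of dimension at most $\#A - 2$ in $H$, hence of $\lambda$-measure zero. With this observation in place the computation above is the whole argument, so the lemma follows in a few lines.
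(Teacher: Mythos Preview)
Your proof is correct and follows the same route as the paper: the paper's argument is the one-line observation that, up to negligible events, $\{g(U,p)=a\}=\{U\in S_a(p)\}$, whence $\P[g(U,p)=a]=\mu(S_a(p))=p(a)$. Your version simply unpacks this with the explicit sets $T_a(p)$ and spells out why the boundary overlaps are $\lambda$-null, which the paper records just before the definition of $g$.
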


Indeed, up to negligible events, $\{g(U,p)=a\}=\{U\in S_a(p)\}$, hence
$$
\P[g(U,p)=a]=\mu(S_a(p))=p(a).
$$

The following lemma is our main tool to construct governing sequences.

\begin{lemme}\label{innovation}
Let $X$ be a random variable with distribution $p$ on $A$. Let $W$ be a random variable with uniform distribution on $S$ and independent of $X$. Introduce
$$
U=f_X(W,p)=\sum_{a \in A} f_a(W,p) \mathbf{1}_{\{X=a\}}.
$$
Then $U$ is uniformly distributed on $S$ and $X=g(U,p)$ almost surely.
\end{lemme}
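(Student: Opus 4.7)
The statement has two parts: that $U$ is uniform on $S$, and that $g(U,p)=X$ almost surely. The first part is a direct change-of-variables computation using Lemma~\ref{ll.det}, and the second part uses the explicit characterization of $S_a(p)$ given in Lemma~\ref{S_a=sigma} together with the fact that the simplices $S_a(p)$ overlap only on a set of Lebesgue measure zero.

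For the law of $U$, I would condition on $X$ and exploit the independence of $W$ and $X$. For any Borel set $B\subset S$,
$$
\P[U\in B]=\sum_{a\in A} p(a)\,\P\bigl[f_a(W,p)\in B\bigr]=\sum_{a\in A} p(a)\,\mu\bigl(f_a^{-1}(B)\bigr).
$$
Because $f_a(S,p)=S_a(p)$, one has $f_a^{-1}(B)=f_a^{-1}(B\cap S_a(p))$, and since $f_a(\cdot,p)$ is an affine bijection from $H$ to $H$ with $|\det f_a(\cdot,p)|=p(a)$ by Lemma~\ref{ll.det}, the change-of-variables formula gives $\lambda\bigl(f_a^{-1}(B\cap S_a(p))\bigr)=\lambda(B\cap S_a(p))/p(a)$. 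Substituting yields
$$
\P[U\in B]=\frac{1}{\lambda(S)}\sum_{a\in A}\lambda(B\cap S_a(p)).
$$
The corollary following Lemma~\ref{S_a=sigma} ensures that the $S_a(p)$ cover $S$ with pairwise intersections of $\lambda$-measure zero, so the sum collapses to $\lambda(B)/\lambda(S)=\mu(B)$, which gives the uniform law of $U$.

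For the identity $X=g(U,p)$ a.s., observe that by construction $U=f_X(W,p)\in f_X(S,p)=S_X(p)$, so the set $\{a\in A\,:\,U\in S_a(p)\}$ always contains $X$; hence $g(U,p)\leq X$ for the fixed total ordering on $A$. The event $\{g(U,p)<X\}$ forces the existence of some $b<X$ with $U\in S_b(p)$, which means $U\in S_X(p)\cap S_b(p)$. By the characterization in Lemma~\ref{S_a=sigma}, this intersection is contained in $\{x\in S\,:\,x_X/p(X)=x_b/p(b)\}$, a hyperplane of $H$ and hence a $\lambda$-null subset of $S$. Since the law of $U$ is $\mu$ (by the first part), each such event has probability zero, and summing over the finitely many pairs $(a,b)\in A\times A$ gives $\P[g(U,p)\neq X]=0$.

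The only mild obstacle is the tie-breaking on the boundary: the simplices $S_a(p)$ do share faces, so one must be careful that the ``min'' in the definition of $g$ only matters on a set of $\mu$-measure zero. This is exactly what the pairwise-intersection computation above delivers, so no further work is needed.
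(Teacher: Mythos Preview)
Your argument is correct and follows essentially the same route as the paper: the uniformity of $U$ is obtained by conditioning on $X$, using the independence of $W$ and $X$, and applying the change-of-variables formula from Lemma~\ref{ll.det} together with the fact that the $S_a(p)$ tile $S$ up to null sets. The only difference is one of presentation: the paper dispatches $X=g(U,p)$ first in a single line (``since $U\in f_X(S,p)=S_X(p)$'') and then proves uniformity, whereas you prove uniformity first and then use it to control the tie-breaking boundary explicitly; your treatment of the event $\{g(U,p)<X\}$ via the hyperplane $\{x_X/p(X)=x_b/p(b)\}$ is a welcome unpacking of what the paper leaves implicit.
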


\begin{proof}
Since $U\in f_X(S,p)=S_X(p)$, $X=g(U,p)$ almost surely.
We now prove that $U$ is uniformly distributed on $S$.

The sets $S_a(p)$ for $a$ in $A$ cover $S$  and their pairwise intersections are negligible for $\lambda$. Hence, for every Borel subset $B$ of $S$,
\begin{align*}
  \P [U\in B] &= \sum_{a\in A} \P[X=a\ ;\ f_a(W,p) \in B ]
   = \sum_{a\in A} \P[X=a]\cdot\P[W \in f_a(\cdot,p)^{-1}(B)]\\
   &= \sum_{a\in A} p(a)\,\frac{\lambda\big(f_a(\cdot,p)^{-1}(B)\cap S\big)}{\lambda(S)}
   = \sum_{a\in A} \frac{\lambda\big(B \cap S_a\big)}{\lambda(S)}= \frac{\lambda(B)}{\lambda(S)}= \mu(B),
\end{align*}
where the second equality stems from the independence of $X$ and $W$ and the fourth equality stems from lemma~\ref{ll.det}. This concludes the proof. \hfill $\square$ \end{proof}

\subsection{Upper bound of the error}\label{S1.2}

In this section we study the dependence of the random variable $g(U,p)$ with respect to $p$. The following result will be used twice.

\begin{proposition}[Upper bound of the error]
\label{prop maj erreur}
Let $U$ be a random variable uniformly distributed on $S$.
Let $p$ and $q$ be two probabilities on $A$. Then,
$$
\P[g(U,p) \ne g(U,q)] \leq 2\,\|p-q\|.
$$
In the special case $\#A=2$,
$$
\P[g(U,p) \ne g(U,q)] = \|p-q\|.
$$
\end{proposition}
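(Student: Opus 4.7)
The plan starts with the decomposition
\[
\P[g(U,p) \ne g(U,q)] \;=\; \sum_{a \in A} \mu(S_a(p) \setminus S_a(q)) \;=\; 1 - \sum_{a \in A} \mu(S_a(p) \cap S_a(q)),
\]
which holds because the events $\{U \in S_a(p) \setminus S_a(q)\}$ for $a \in A$ are pairwise disjoint modulo $\mu$-null sets and their union is exactly $\{g(U,p) \ne g(U,q)\}$.

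For the case $\#A = 2$, I would parametrize $S$ as the segment $[E_0, E_1]$ via the coordinate $t = x_1 \in [0,1]$. Lemma~\ref{S_a=sigma} then gives $S_0(p) = [p(1), 1]$ and $S_1(p) = [0, p(1)]$, and analogously for $q$. Since $S_a(p)$ and $S_a(q)$ are nested intervals sharing an endpoint, $\mu(S_a(p) \cap S_a(q)) = \min(p(a), q(a))$; summing over $a \in \{0,1\}$ gives $1-\|p-q\|$, and hence $\P[g(U,p) \ne g(U,q)] = \|p-q\|$, as claimed in the second statement.

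For the general upper bound, I would first identify $S_a(p) \cap S_a(q)$ explicitly. Using Lemma~\ref{S_a=sigma} twice, $x \in S_a(p) \cap S_a(q)$ iff $x_b \ge s_{ab}\, x_a$ for every $b \ne a$, where $s_{ab} = \max\bigl(p(b)/p(a),\, q(b)/q(a)\bigr)$. This set is itself a simplex of the form $S_a(\rho_a)$ for the probability $\rho_a$ with $\rho_a(a) = 1/D_a$, $\rho_a(b) = s_{ab}/D_a$, and $D_a = 1+\sum_{b\ne a} s_{ab}$; Lemma~\ref{ll.det} then yields $\mu(S_a(p)\cap S_a(q)) = 1/D_a$. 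It therefore remains to show $\sum_a 1/D_a \ge 1 - 2\|p-q\|$.

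The main obstacle is this last inequality. The conceptual step is to recognise that $\pi_{ab} := \mu(S_a(p) \cap S_b(q))$ defines a coupling of $p$ and $q$ which is \emph{monotone} in the likelihood-ratio order: if $a \ne b$ and $S_a(p) \cap S_b(q) \ne \emptyset$, then combining the inequality defining $S_a(p)$ at the index $b$ with the inequality defining $S_b(q)$ at the index $a$ yields $p(a)q(b) \ge p(b)q(a)$. Splitting $A = A^+ \sqcup A^-$ according to whether $p(a) \ge q(a)$, flow conservation forces the total $\pi$-mass from $A^+$ to $A^-$ to equal exactly $\|p-q\|$, so
\[
\P[g(U,p)\ne g(U,q)] \;=\; \|p-q\| \;+\; \sum_{\substack{a,b\in A^+\\a\ne b}}\pi_{ab} \;+\; \sum_{\substack{a,b\in A^-\\a\ne b}}\pi_{ab};
\]
the remaining work is to bound the two within-block sums, whose total equals $q(A^+)+p(A^-)-\sum_a \pi_{aa}$, by $\|p-q\|$. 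This last bound is the technical heart of the proof and is where the explicit form $\pi_{aa}=1/D_a$ is exploited.
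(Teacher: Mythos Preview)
Your setup is correct, the $\#A=2$ case is fine, and your geometric identification $S_a(p)\cap S_a(q)=S_a(\rho_a)$, giving $\mu(S_a(p)\cap S_a(q))=1/D_a$, is a clean alternative to the paper's device of representing $U$ via i.i.d.\ exponential random variables. But the coupling paragraph is circular. You correctly observe that $\pi$ carries no mass from $A^-$ to $A^+$ and that the cross-block mass equals $\|p-q\|$; however the within-block off-diagonal mass is $q(A^+)+p(A^-)-\sum_a\pi_{aa}=(1-\|p-q\|)-\sum_a\pi_{aa}$, so your displayed decomposition collapses back to $\P[g(U,p)\ne g(U,q)]=1-\sum_a\pi_{aa}$, exactly where you began. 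Bounding the within-block sums by $\|p-q\|$ is thus \emph{equivalent} to the inequality $\sum_a 1/D_a\ge 1-2\|p-q\|$ you had already isolated; the likelihood-ratio monotonicity has not been used to prove it, and you stop precisely at the point where the real work starts.

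The paper fills this gap by a direct algebraic manipulation rather than a coupling argument. Setting $r(a)=\sum_b[q(b)p(a)-p(b)q(a)]_+$, one checks from the formula for $1/D_a$ that
\[
\P[g(U,p)\ne g(U,q)]=\sum_{a}p(a)\,\frac{r(a)}{q(a)+r(a)}.
\]
Since $q(a)+r(a)\ge q(a)+\sum_b\bigl(q(b)p(a)-p(b)q(a)\bigr)=p(a)$, each summand is at most $r(a)$; and then, using $(u+v)_+\le u_++v_+$ with $u=p(a)(q(b)-p(b))$ and $v=p(b)(p(a)-q(a))$,
\[
\sum_a r(a)\le\sum_{a,b}\Bigl(p(a)\,[q(b)-p(b)]_++p(b)\,[p(a)-q(a)]_+\Bigr)=2\|p-q\|.
\]
This two-line estimate is the ``technical heart'' you flagged but did not carry out.
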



\begin{remarque}The better result when $\#A=2$ is the reason why
theorem~\ref{theo delta} involves weaker hypotheses on
$(\delta_n)_n$ in this case.
\end{remarque}

\eject
\newpage

\begin{figure}[h]
\begin{center}
\includegraphics[width=.95\textwidth]{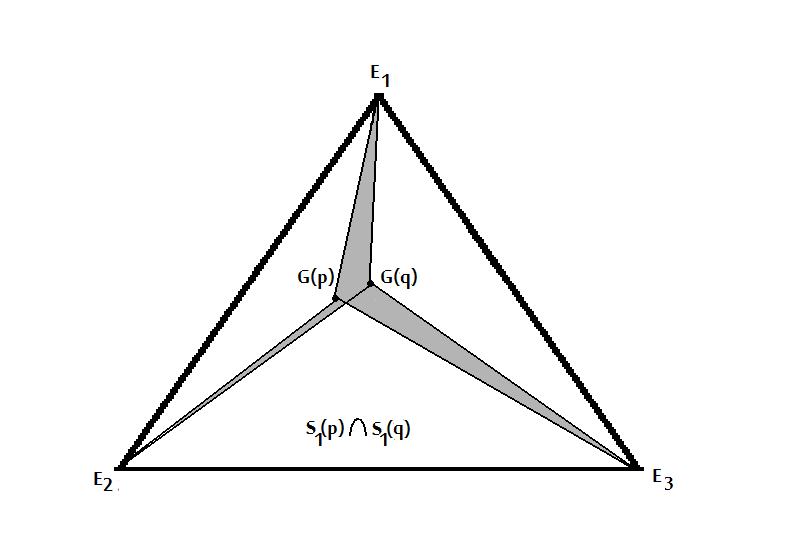}
\caption{Computation of $\P[g(U,p) \ne g(U,q)]$. The grey area shows the $s \in S$ such that $g(s,p) \ne g(s,q)$.}
\end{center}
\label{Graphique innovations 2}
\end{figure}

\begin{proof}[of proposition \ref{prop maj erreur}]
Assume without loss of generality that $U$ is constructed from i.i.d.\ random variables $(\varepsilon_a)_{a\in A}$ exponentially distributed with parameter $1$, as
follows. For every $a$ in $A$,
$$
U_a=\frac{\varepsilon_a}{\sum_{b\in A}\varepsilon_b}.
$$
The event $\{g(U,p) \ne g(U,q)\}$ depends on $(\varepsilon_a)_a$, as follows.
By definition of $g$, up to negligible events,
$$
\{g(U,p) = g(U,q)\} = \bigcup_{a\in A}C_a,\quad\text{with } C_a= \{U \in S_a(p) \cap S_a(q)\}.
$$
Furthermore, since for every $a\in A$, $\P[C_a]=0$ if $p(a)=0$ or $q(a)=0$, and since $\mu(C_a \cap C_b)=0$ for $a \ne b$, one gets
$$
\P[g(U,p) = g(U,q)]=\sum_{a \in A} \P[C_a] {\bf 1}_{\{p(a)>0,\ q(a)>0\}}.
$$
For every $a \in A$ such that  $p(a)>0$ and $q(a)>0$, lemma~\ref{S_a=sigma} gives
$$
C_a=\left\{\frac{\varepsilon_a}{p(a)}=\min_b \frac{\varepsilon_b}{p(b)}\,;\,\frac{\varepsilon_a}{q(a)}=\min_b\frac{\varepsilon_b}{q(b)}\right\}
= \left\{ \varepsilon_a \leq \min_b\Big(p(a)\frac{\varepsilon_b}{p(b)}\,,\, q(a)\frac{\varepsilon_b}{q(b)}\Big)\right\},
$$
hence
$$
C_a=
\bigcap_{b\ne a}\left\{ \varepsilon_b \ge\lambda_{b/a}\varepsilon_a\right\},
\quad
\lambda_{b/a}=\max\Big(\frac{p(b)}{p(a)},\,\frac{q(b)}{q(a)}\Big).
$$
Conditioning on $\varepsilon_a$ and using that the random variables $(\varepsilon_b)_{b\ne a}$ are i.i.d., exponentially distributed and independent of $\varepsilon_a$, one gets
$$
\P\Big[C_a\,\big|\,\varepsilon_a\Big]
=
\P\Big[\bigcap_{b\ne a} \left\{ \varepsilon_b \ge \lambda_{b/a}\varepsilon_a\right\} \Big| \varepsilon_a\Big]
=
\prod_{b \ne a} \exp\left(- \lambda_{b/a} \varepsilon_a\right),
$$
hence
$$
\P\Big[C_a\Big]
=
\E\left(\exp\left(- \Big(\sum_{b \ne a} \lambda_{b/a}\Big) \varepsilon_a\right)\right)
=
\frac{1}{1+\sum_{b \ne a} \lambda_{b/a}}.
$$
Therefore
$$
\P[g(U,p) = g(U,q)]
=
\sum_{a\in A}\P\left[C_a\right] {\bf 1}_{\{p(a)>0,\ q(a)>0\}}
=
\sum_{a\in A} \frac{{\bf 1}_{\{p(a)>0,\ q(a)>0\}}}{1+\sum_{b \ne a} \lambda_{b/a}}.
$$
This last expression is not so easy to compute because each $\lambda_{b/a}$ is defined as a maximum. However,
$$
\sum_{a \in A}\frac{{\bf 1}_{\{p(a)>0\}}}{1+\sum_{b \ne a} p(b)/p(a)}=\sum_{a \in A }\frac{p(a) {\bf 1}_{\{p(a)>0\}}}{p(a)+\sum_{b \ne a} p(b)}=\sum_{a}p(a)=1.
$$
Subtracting the expression for $\P[g(U,p) = g(U,q)]$ to this, one gets
\begin{align*}
\P[g(U,p) \ne g(U,q)]&=
\sum_{a \in A}\frac{{\bf 1}_{\{p(a)>0\}}}{1+\sum_{b \ne a}p(b)/p(a)}-\sum_{a \in A}\frac{{\bf 1}_{\{p(a)>0,\ q(a)>0\}}}{1+\sum_{b \ne a} \lambda_{b/a}}.
\end{align*}
Coming back to the definition of $\lambda_{b/a}$ and using simple algebraic manipulations, one gets for any $a\in A$ such that $p(a)>0$ and $q(a)>0$,
\begin{align*}
\frac{1}{1+\sum_{b \ne a}p(b)/p(a)}-\frac{1}{1+\sum_{b \ne a} \lambda_{b/a}}
&=\frac{\sum_{b\ne a}\left(\lambda_{b/a}-p(b)/p(a)\right)}{\Big(1+\sum_{b \ne a} p(b)/p(a)\Big)\Big(1+\sum_{b \ne a} \lambda_{b/a}\Big)}\\
&=p(a)\frac{r(a)}{q(a)+r(a)},
\end{align*}
where
$$
r(a)=\sum_{b}\left[ q(b)p(a)-p(b)q(a) \right]_+.
$$
Furthermore, for any $a\in A$ such that $p(a)>0$ and $q(a)=0$ one gets
$$
\frac{1}{1+\sum_{b \ne a}p(b)/p(a)}=p(a)=p(a)\frac{r(a)}{q(a)+r(a)}.
$$

Summing on every $a$, one gets finally
$$
\P[g(U,p) \ne g(U,q)]
=
\sum_{a \in A}p(a)\frac{r(a)}{q(a)+r(a)},
$$

If $A=\{a,a'\}$ and, for example, $q(a)<p(a)$, then $r(a)=p(a)-q(a)$ and $r(a')=0$, hence $$
\P[g(U,p) \ne g(U,q)]
=r(a)=p(a)-q(a)=\|p-q\|.
$$
In the general case, note that
$$
q(a)+r(a)\ge q(a)+\sum_{b}\left(q(b)p(a)-p(b)q(a)\right)=p(a),
$$
hence
\begin{align*}
\P[g(U,p) \ne g(U,q)]
&\le\sum_{a \in A}r(a)=
\sum_{a \in A}\sum_{b\ne a} \left[p(a)q(b)-q(a)p(b)\right]_+
\\
&\le
\sum_{a \in A}\sum_{b\ne a}p(a)\Big[\left[ q(b)-p(b) \right]_++p(b)\left[ p(a)-q(a) \right]_+\Big],
\end{align*}
where the last inequality stems from the fact that $(u+v)_+\le (u)_++(v)_+$ for every $u$ and $v$. Finally, the last double sum is at most $2\|p-q\|$,
which ends the proof in the general case.
\hfill $\square$ \end{proof}


Recall that if $p$ and $q$ are two {\it fixed} probabilities on $A$, then for every random variables $Z_p$ and $Z_q$ with laws $p$ and $q$ defined on the same probability space,
$$
\P[Z_p\ne Z_q]\geq\|p-q\|.
$$
Conversely, a standard construction in coupling theory provides random variables $Z_p$ and $Z_q$ with laws $p$ and $q$ such that
$
\P[Z_p\ne Z_q]=\|p-q\|.
$

The interest of proposition~\ref{prop maj erreur} is to provide a global coupling of {\it all} probabilities on $A$. One can wonder whether the constant $2$ in this proposition can be improved. Our next result (not used in the sequel) shows that the constant $2$ is optimal for the coupling $(g(U,p))_p$, and that it is not possible to do much better with any other global coupling.

\begin{proposition}[Optimality of the upper bound of the error]\label{optimalite lemme erreur}~\\
If $\#A\ge 3$, the constant $2$ in the inequality $\P[g(U,p) \ne g(U,q)] \le 2\|p-q\|$ of proposition~\ref{prop maj erreur} is optimal.
\\
Furthermore, if $(Z_p)_p$ is a family of random variables indexed by probabilities on $A$, where each $Z_p$ follows the law $p$, then there exist two probabilities $p\ne q$ such that
$$
\P[Z_p\ne Z_q] \ge2(1-1/\#A)\|p-q\|.
$$
\end{proposition}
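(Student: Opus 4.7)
For Part~1 my approach is to exhibit an explicit family of pairs whose ratio tends to $2$. Since $\#A\ge 3$ I pick distinct symbols $a_1,a_2,a_3\in A$ and, for $0<\delta<\epsilon$ small, consider the probabilities on $A$ supported on $\{a_1,a_2,a_3\}$ given by $p(a_1)=1-2\epsilon$, $p(a_2)=p(a_3)=\epsilon$, and $q(a_1)=1-2\epsilon-\delta$, $q(a_2)=2\epsilon$, $q(a_3)=\delta$, for which $\|p-q\|=\epsilon$. Plugging them into the formula
\[
\P[g(U,p)\ne g(U,q)]=\sum_{a\in A}p(a)\,\frac{r(a)}{q(a)+r(a)},\qquad r(a)=\sum_{b\in A}\bigl[q(b)p(a)-p(b)q(a)\bigr]_+,
\]
derived in the proof of Proposition~\ref{prop maj erreur}, an elementary calculation gives $r(a_1)\sim\epsilon$, $r(a_2)=0$, $r(a_3)\sim\epsilon-\delta$ with respective contributions $\sim\epsilon$, $0$, $\sim\epsilon-\delta$, so the numerator is $\sim 2\epsilon-\delta$ while the denominator is $\epsilon$: letting $\delta/\epsilon\to 0$ makes the ratio tend to $2$. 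If strictly full support is required to define $g$, I would spread a tiny mass $\eta$ across $A\setminus\{a_1,a_2,a_3\}$ in both $p$ and $q$ and send $\eta\to 0$ afterwards.

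For Part~2 I set $k=\#A$ and consider the test family $(q_a)_{a\in A}$, where $q_a$ is the uniform law on $A\setminus\{a\}$; then $\|q_a-q_b\|=1/(k-1)$ for $a\ne b$. Given an arbitrary family $(Z_p)_p$ as in the statement, $Z_{q_a}\in A\setminus\{a\}$ almost surely, so $(Z_{q_a})_{a\in A}$ defines a random map $f\colon A\to A$ with no fixed point. Setting $T_c:=|f^{-1}(c)|$, one has $\sum_{c\in A}T_c=k$ and $T_c\le k-1$ almost surely, since $a=c$ cannot lie in $f^{-1}(c)$.

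The crux is the elementary identity
\[
\sum_{(a,b):\,a\ne b}\un_{\{Z_{q_a}=Z_{q_b}\}}=\sum_{c\in A}T_c(T_c-1),
\]
combined with the extremal estimate: for non-negative integers $(T_c)$ with $\sum_c T_c=k$ and $T_c\le k-1$, one has $\sum_c T_c(T_c-1)\le(k-1)(k-2)$, the maximum being attained when one $T_c$ equals $k-1$, another equals $1$, and the rest vanish (this is exactly where the constraint coming from $f(a)\ne a$ enters: without $T_c\le k-1$ the bound would jump to $k(k-1)$ and the whole argument would collapse). Taking expectations yields $\sum_{a\ne b}\P[Z_{q_a}=Z_{q_b}]\le(k-1)(k-2)$, so
\[
\sum_{a\ne b}\P[Z_{q_a}\ne Z_{q_b}]\ge k(k-1)-(k-1)(k-2)=2(k-1).
\]

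Averaging over the $k(k-1)$ ordered pairs produces at least one pair $(a,b)$ with $\P[Z_{q_a}\ne Z_{q_b}]\ge 2/k=2(1-1/k)\,\|q_a-q_b\|$, which is the desired witness. The genuine difficulty of the plan lies in choosing the correct test family for Part~2: once one hits upon the $(q_a)_a$, the statement collapses to a pigeonhole-style constraint on fixed-point-free maps $A\to A$ and everything is essentially automatic; by contrast, Part~1 is a routine computation using the formula of Proposition~\ref{prop maj erreur}.
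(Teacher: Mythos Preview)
Your proof is correct and follows essentially the same approach as the paper's: the same plug-into-the-exact-formula-and-take-a-limit strategy for Part~1 (the paper uses the simpler one-parameter example $p(a)=q(a)=1-\varepsilon$, $p(b)=q(c)=\varepsilon$, $p(c)=q(b)=0$), and the identical test family $(q_a)_{a\in A}$ of uniform laws on $A\setminus\{a\}$ for Part~2. The only cosmetic difference in Part~2 is bookkeeping: the paper fixes $a_0$, sets $D=\{a:Z^a=Z^{a_0}\}$, and bounds $\sum_{a,b}\un_{\{Z^a\ne Z^b\}}\ge 2\,\#D\,(N-\#D)\ge 2(N-1)$ via $1\le\#D\le N-1$, which is exactly your fiber argument restricted to a single fiber rather than summed over all of them.
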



\begin{proof} The first part of the proposition follows from the explicit example where $\{a,b,c\}\subset A$, $p(a)=q(a)=1-\varepsilon$ and $p(b)=q(c)=\varepsilon$ in the limit $\varepsilon\to0$.

As regards the second part, let $N=\#A$ and for every $a$ in $A$, let $Z^a$ denote the random variable of $(Z_p)_p$ with uniform distribution on $A \setminus \{a\}$. Choose $a_0 \in A$, and consider the random set $D$ of the elements $a$ of $A$ such that $Z^a = Z^{a_0}$.
For every $a,b\in A$,
$$
\mathbf{1}_{[Z^a \ne Z^b]}\geq\mathbf{1}_{[a\in D, b\notin D]}+\mathbf{1}_{[a \notin D, b \in D]}.
$$
By summing over $a,b \in A$ and by taking expectations, one gets
$$
\sum_{a,b}\P[Z^a \ne Z^b]
\geq
\E\big[2\#D(N-\#D)\big].
$$
Of course $a_0\in D$, and $Z^{a_0}\notin D$, since $Z^a\ne a$ almost surely for every $a$. Thus  $1 \le \#D\le N-1$.
Hence,
$$
\sum_{a,b}\P[Z^a \ne Z^b]
\ge
2(N-1).
$$
There are at most $N(N-1)$ nonzero terms in the sum above, hence there exist $a\ne b$ such that
$$
\P[Z^a \ne Z^b]
\ge
2/N.
$$
Since $\|p_a-p_b\|=1/(N-1)$, this yields
$\P[Z^a \ne Z^b]\ge2(1-1/N)\|p_a-p_b\|$, which ends the proof.
\hfill $\square$ \end{proof}

\subsection{Proof of theorem \ref{theo delta}}\label{S1.3}

Let $W=(W_n)_{n \in \z}$ be an i.i.d.\ sequence of random variables,  uniformly distributed on $S$, independent of the process $X$. Applying the construction of governing sequences in section~\ref{S1.1}, we introduce, for every $n$ in $\z$,
$$
U_n=f_{X_n}(W_n,P_{n-1})\quad\text{with}\quad
P_{n-1}= p(\cdot|X^\passe_{n-1}).
$$
Let $n \in \z$. Thanks to the stationarity of the process $X$ and to the independence of $X$ and $W$, $P_{n}$ is the conditional law of $X_{n+1}$ given $\F^{X,W}_{n}$. Since $W_{n+1}$ is independent of $\F^{X,W}_{n}$ and $X_{n+1}$, lemma~\ref{innovation} yields
that
\begin{enumerate}
\item $U_{n+1}$ is independent of $\F^{X,W}_{n}$, and therefore of $\F^{X,U}_{n}$,
\item $U_{n+1}$ is uniformly distributed on $S$,
\item $X_{n+1}=g(U_{n+1},P_{n})$ almost surely.
\end{enumerate}

For every $T$ in $\z$, we now define a process $X^T$ which is a function of $(U_n)_{n\ge T+1}$ in such a way that $X^T$ approximates $X$ when $T\to-\infty$.

Let $X^T_n=a_0$ for $n\le T$ with $a_0\in A$ fixed, and assume that $X^T_n$ is defined up to time $n\ge T$. Define
$$
X^T_{n+1}=g(U_{n+1}, P^T_n)\quad\text{where}\quad
P^T_{n}= p(\cdot|(X^T)^\passe_n).
$$
Proposition \ref{prop maj erreur} implies that for $n \ge T$,
$$
\P\big[X_{n+1}\ne X^T_{n+1}\sach\F^{X,U}_{n}\big]
=
\P\big[g(U_{n+1},P_n)\ne g(U_{n+1},P^T_{n})\sach\F^{X,U}_{n}\big]
\leq
2\| P_n-P^T_n\|,
$$
because $P_n$ and $P^T_{n}$ are measurable for $\F^{X,U}_{n}$ and $U_{n+1}$ is independent of $\F^{X,U}_{n}$.

For $n$  in $\z$, let $L_n^{T}$ count the number of consecutive times before $n$ such that $X^T$ and $X$ coincide, that is,
$$
L_n^{T}=\max\{k \ge 0\,:\,X^T_{n-k+1:n}=X_{n-k+1:n}\}.
$$
On the event $\{L_n^{T}=\ell\}$, the sequences $X^\passe_{n}$ and $(X^T)^\passe_n$ coincide on their last $\ell$ terms. Hence, on the event $\{L_n^{T}=\ell\}$,
$$
\| P_n-P^T_n\|
\le
\sup \Big\{\|p(\cdot|xz)-p(\cdot|yz)\|\,;\, x\in A^\passe,\,y\in A^\passe,\,z\in A^\ell \Big\}
  = \delta_\ell .
$$
One gets
$$
\P\big[X_{n+1}\ne X^T_{n+1}\sach\F_{n}^{X,U}\big]
\le
2\delta_{L_n^{T}}.
$$
The end of our proof follows the method in~\cite{bressaud2006stationary}: consider a $\z^+$-valued Markov chain, $Z=(Z_n)_{n\ge 0}$ starting from $Z_0=0$, with transition probabilities
$$
p_{i,i+1}=1-2\delta_i,\quad
p_{i,0}=2\delta_i, \text{ for every }i \geq 0.
$$
For any $n\ge T$, it happens that $L_n^{T}$ dominates stochastically $Z_{n-T}$, in the sense of the following lemma.

\begin{lemme}
\label{l.stochdom}
 For every $k \ge 0$ and $n\ge T$, $\P[L_n^{T}\ge k]\ge \P[Z_{n-T}\ge k]$.
\end{lemme}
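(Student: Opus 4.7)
The plan is to argue by induction on $m := n - T \ge 0$, taking as the inductive hypothesis that $\P[L_n^T \ge k] \ge \P[Z_{n-T} \ge k]$ for every $k \ge 0$. The base case $m = 0$ is immediate, since $L_T^T = 0 = Z_0$ almost surely by construction.

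For the inductive step, the starting point is the elementary dynamical identity $L_{n+1}^T = L_n^T + 1$ on the event $\{X_{n+1} = X_{n+1}^T\}$ and $L_{n+1}^T = 0$ otherwise. This immediately yields, for any $k \ge 1$,
$$
\P[L_{n+1}^T \ge k] = \E\bigl[\mathbf{1}_{\{L_n^T \ge k-1\}} \, \P[X_{n+1} = X_{n+1}^T \mid \F_n^{X,U}]\bigr].
$$
Plugging in the bound $\P[X_{n+1} \ne X_{n+1}^T \mid \F_n^{X,U}] \le 2\delta_{L_n^T}$ established just before the lemma then gives $\P[L_{n+1}^T \ge k] \ge \E[\phi(L_n^T)]$, where $\phi(j) := \mathbf{1}_{\{j \ge k-1\}}(1 - 2\delta_j)$. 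A short calculation from the explicit transition probabilities of $Z$ identifies $\E[\phi(Z_{n-T})]$ with $\P[Z_{n-T+1} \ge k]$, so the conclusion reduces to $\E[\phi(L_n^T)] \ge \E[\phi(Z_{n-T})]$.

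The heart of the matter is this last inequality, and this is also where I expect the main (very mild) obstacle to lie: one has to check that $\phi$ is a non-negative, non-decreasing function on $\N$ in order to transfer the stochastic-domination hypothesis. Monotonicity of $\phi$ on $\{j \ge k-1\}$ follows from the fact that $(\delta_i)_{i \ge 0}$ is non-increasing, a property already recorded in the paper; the non-negativity of the jump of $\phi$ at $j = k-1$ uses the hypothesis $2\delta_0 < 1$ from theorem~\ref{theo delta}, which forces $1 - 2\delta_j \ge 1 - 2\delta_0 > 0$ for every $j$. Once these two properties are in hand, the standard reformulation of stochastic domination as inequality of expectations of non-decreasing non-negative functions yields $\E[\phi(L_n^T)] \ge \E[\phi(Z_{n-T})]$, which closes the induction.
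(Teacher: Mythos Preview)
Your proof is correct and follows essentially the same route as the paper's: induction on $n-T$, the identity $\{L_{n+1}^T \ge k\} = \{L_n^T \ge k-1,\ X_{n+1}=X_{n+1}^T\}$, the lower bound by $\phi(j) = \mathbf{1}_{\{j\ge k-1\}}(1-2\delta_j)$, the observation that $\phi$ is non-decreasing (using that $(\delta_i)$ is non-increasing and $2\delta_0<1$), and the identification $\E[\phi(Z_{n-T})]=\P[Z_{n-T+1}\ge k]$. One small inaccuracy in the base case: $L_T^T$ need not equal $0$ almost surely (it is at least $1$ on $\{X_T=a_0\}$), but since $L_T^T \ge 0 = Z_0$ the inequality $\P[L_T^T\ge k]\ge \P[Z_0\ge k]$ still holds trivially.
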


\begin{proof}[of lemma \ref{l.stochdom}]
The result is obvious for $n=T$ since $Z_0=0$.
Assume that the result holds for $n\ge T$. Then,
\begin{eqnarray*}
    \P[L_{n+1}^{T}\ge k+1]&=&\P[L_n^{T}\ge k,\, X_{n+1}=X_{n+1}^{T}] \\
    &=&\E \big[ \un_{\{L_n^{T}\ge k\}}\P[ X_{n+1}=X_{n+1}^{T}\sach\F^{X,U}_n  ]\big]\\
    &\ge& \E \big[ \un_{\{L_n^{T}\ge k\}}(1-2\delta_{L_{n}^T})\big].
\end{eqnarray*}
Since $(\delta_i)_i$ is non-increasing and since $2\delta_i<1$ for every $i$,
the sequence indexed by $i$ of general term $\un_{\{i\ge k\}}(1-2\delta_i)$ is non-decreasing.
By induction, one obtains
\begin{eqnarray*}
    \P[L_{n+1}^{T}\ge k+1]&\ge& \E \big[ \un_{\{Z_{n-T}\ge k\}}(1-2\delta_{Z_{n-T}})\big]\\
    &=&\P[Z_{n-T}\ge k,\, Z_{n-T+1}=Z_{n-T}+1]\\
    &=&\P[Z_{n-T+1}\ge k+1],
\end{eqnarray*}
which ends the recurrence over $n\ge T$ and the proof of the lemma.
\hfill $\square$ \end{proof}

Using this to estimate $\P[L_{n}^{T} = 0]$, one gets
$$
\P\big[X^T_n\ne X_n\big] = \P[L_{n}^{T} = 0] \le \P[Z_{n-T} = 0].
$$
Let $\mu$ be the measure defined on $\z^+$ by
$$
\nu(k)=\prod_{n=0}^k(1-2\delta_n),
$$
for every $k\geq 0$. the hypothesis of theorem~\ref{theo delta} ensure that $\mu$ has infinite mass.

If   $\prod_{n=0}^{+\infty}(1-2\delta_n)=0$, then $\mu$ is invariant and the state $0$ is recurent. But the chain $Z$ is irreducible since the hypothesis of theorem~\ref{theo delta} forces the positivity of the probabilities $(1-2\delta_n)$. Hence $Z$ is null recurrent.\\
If  $\prod_{n=0}^{+\infty}(1-2\delta_n)>0$, then $Z$ is transient since for every $i$, the probability of never returning to $i$ from $i$ is
 $\prod_{n=i}^{+\infty}(1-2\delta_n)>0$.
In both cases
$$
\P\big[X^T_n \ne X_n\big] \to 0 \text{ when } T \to - \infty.
$$
In other words, $X^T_{n}$ converges in probability to $X_n$ when $T\to-\infty$,
in particular $X_{n}$ is measurable for $ \F^U_{n}$, which proves that $U$ generate $X$. Using lemma~\ref{immersion} one gets that the filtration $\F^X$ is immersed in $\F^U$,  therefore $\F^X$ is standard.
This ends the proof of theorem \ref{theo delta}.

\section{Average influences}\label{S2}
This section is devoted to the proof of theorem~\ref{theo e}.


\subsection{Priming lemma}\label{S2.2}

Recall that the governing sequence $U$ with values in $S$ and based on lemma~\ref{innovation} is defined by
$$
U_n=f_{X_n}(W_n,P_{n-1})\quad\text{where}\quad P_{n-1}=p(\cdot|X^\passe_{n-1}),
$$
where $W=(W_n)_{n \in \z}$ is a sequence of i.i.d. random variables uniform on $S$, independent of $X$. Recall also that, from lemma~\ref{innovation}, $X_n=g(U_n,P_{n-1})$ almost surely for every $n \geq 0$.

Let $\ell >0$. Let us show that with probability close to $1$, for each $x$ in $A^\ell$,
$X_{1:\ell}=x$ as soon as $U_{1:\ell}\in B_x$ where $B_x$ is a measurable subset of $S^\ell$ with $\mu$-measure independent of $x$.

Recall that $X$ satisfies the priming condition if for every $a$ in $A$, $p(a|X^\passe_{-1})>0$ almost surely.

\begin{lemme}{\bf(Priming lemma)}\label{lemme amorce}
Set $\ell > 0$.
If $X$ verifies the priming condition,
then for every $\varepsilon \in ]0,1[$, there exist a real number $\beta_\ell>0$ and a collection $(B_x)_{x \in A^\ell}$ of Borel sets of $S^\ell$ such that for every $x \in A^\ell$,
$$\mu^{\otimes \ell}[B_x]=\beta_\ell ~\text{ {\rm and }}~
\P \big[X_{1:\ell}=x~|~U_{1:\ell}\in B_x\big]\geq 1-\varepsilon.$$

Therefore if $Y$ is a random variable valued in $A^\ell$ independent of $(X_n,U_n)_{n \in \z}$,
$$\P[U_{1:\ell}\in B_Y]=\beta_\ell ~\text{ {\rm and }}~ \P\big[X_{1:\ell}=Y~|~U_{1:\ell}\in B_Y\big]\geq 1-\varepsilon.$$
\end{lemme}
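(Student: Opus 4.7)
The plan is to construct $B_x$ as a product of ``safe cones'' in $S$, to reduce the conditional probability bound to an $\F^X_0$-measurable event $G_x$ asserting that the past is ``prime enough'' along the deterministic trajectory $x$, and finally to control $\P[G_x]$ using the priming condition.

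For $\rho \in (0, 1/\#A)$, I first define
\[
C_a(\rho) = \bigcap_{p\,:\,\min_{b\in A} p(b)\,\geq\,\rho} S_a(p),
\]
which by lemma~\ref{S_a=sigma} rewrites as $\{s \in S : s_a(1 - (\#A-1)\rho) \leq s_b\rho\ \forall b \neq a\}$, a polytope with $\mu(C_a(\rho)) = \beta(\rho) > 0$ independent of $a$ by the coordinate symmetry of $S$. Setting $B_x = C_{x_1}(\rho) \times \cdots \times C_{x_\ell}(\rho)$ gives $\mu^{\otimes \ell}(B_x) = \beta(\rho)^\ell =: \beta_\ell$, independent of $x$. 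Introducing the $\F^X_0$-measurable event
\[ G_x = \Big\{\min_{a \in A} p(a | X_0^{\passe}\cdot x_{1:k-1}) \geq \rho \text{ for every } k = 1, \ldots, \ell\Big\}, \]
a straightforward induction on $k$ shows $G_x \cap \{U_{1:\ell} \in B_x\} \subset \{X_{1:\ell} = x\}$: once $X_{1:k-1} = x_{1:k-1}$ is established, on $G_x$ the conditional law $P_{k-1}$ is $\rho$-prime, so $C_{x_k}(\rho) \subset S_{x_k}(P_{k-1})$ and $U_k \in C_{x_k}(\rho)$ forces $X_k = g(U_k, P_{k-1}) = x_k$. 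By lemma~\ref{innovation} the $U_k$ are i.i.d.\ uniform on $S$ and independent of $\F^X_0$, so $G_x$ is independent of $U_{1:\ell}$ and
\[ \P\big[X_{1:\ell} = x\sach U_{1:\ell} \in B_x\big] \geq \P[G_x]. \]

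The main obstacle is choosing $\rho$ so that $\P[G_x] \geq 1 - \varepsilon$ uniformly in $x$. By a union bound over the finite set of pairs $(a, z)$ with $a \in A$ and $z \in \bigcup_{k=0}^{\ell-1} A^k$, it suffices to arrange $\P[p(a | X_0^{\passe}\cdot z) < \rho] \to 0$ as $\rho \to 0$, equivalently $\P[p(a | X_0^{\passe}\cdot z) = 0] = 0$ for each such $(a,z)$. I would prove this by induction on $|z|$: the base case $|z| = 0$ is priming itself; for the inductive step the induction hypothesis together with priming makes $\P[X_{-|z|:-1} = z \sach X^{\passe}_{-|z|-1}] = \prod_j p(z_j | X^{\passe}_{-|z|-1}\cdot z_{1:j-1})$ positive $\pi$-a.s., so in particular $\P[X_{-|z|:-1} = z] > 0$ and the conditional law of $X^{\passe}_{-|z|-1}$ given $\{X_{-|z|:-1} = z\}$ has a $\pi$-a.s.\ positive Radon--Nikodym derivative with respect to $\pi$; on the conditioning event $X^{\passe}_{-|z|-1}\cdot z = X^{\passe}_{-1}$, so priming forces $\P[p(a | X^{\passe}_{-1}) = 0 \sach X_{-|z|:-1} = z] = 0$, and positivity of the density transfers this null event back to $\pi$. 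Finiteness of the pairs $(a,z)$ then permits a single choice of $\rho$.

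Finally, the second half of the lemma (with $Y$ independent of $(X,U)$) follows by conditioning on $Y$: $\P[U_{1:\ell} \in B_Y] = \sum_y \P[Y=y]\,\mu^{\otimes\ell}(B_y) = \beta_\ell$, while $\P[X_{1:\ell} = Y,\,U_{1:\ell} \in B_Y] = \sum_y \P[Y=y]\,\P[X_{1:\ell} = y,\,U_{1:\ell} \in B_y] \geq (1 - \varepsilon)\beta_\ell$, which yields the required conditional lower bound.
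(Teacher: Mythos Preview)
Your argument is correct, and the final conditioning on $Y$ is handled exactly as in the paper. The route you take to the first half, however, differs from the paper's in an instructive way.

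The paper proceeds \emph{sequentially}: at step $m$ it conditions on the event $C_{m-1}=\{X_{1:m-1}=x_{1:m-1},\ U_{1:m-1}\in B^x_1\times\cdots\times B^x_{m-1}\}$ (already shown to have positive probability), observes that the priming condition gives $\P[P_{m-1}(x_m)=0\mid C_{m-1}]=0$ directly because $P_{m-1}$ is the true conditional law of the realised process, and then picks a threshold $q$ so that $\P[P_{m-1}(x_m)\le q\mid C_{m-1}]<\varepsilon/\ell$. Thus the paper's $B^x_m$ depends on $x_{1:m}$ through this adaptive $q$, and a final shrinking of $B^x_\ell$ is needed to equalise the measures across $x$.

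Your construction is \emph{global}: a single $\rho$ is chosen once and for all, the cones $C_{x_k}(\rho)$ are symmetric so $\mu^{\otimes\ell}(B_x)$ is automatically independent of $x$, and the burden shifts to showing $\P[p(a\mid X_0^{\passe}z)=0]=0$ for every finite word $z$. This is a genuine extra lemma, and your density/absolute-continuity induction is the natural way to get it from the priming condition. What you gain is a cleaner product structure with no shrinking step and a uniform choice of $\rho$; what the paper gains is that it never has to reason about the law of the process along a \emph{hypothetical} forced trajectory $z$, only about the realised trajectory conditioned on $C_{m-1}$, so priming applies in one line.

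Two minor remarks. First, the implication ``$U_k\in C_{x_k}(\rho)\subset S_{x_k}(P_{k-1})$ forces $X_k=x_k$'' holds only up to a null set (the tie-breaking in $g$ could in principle select some $b<x_k$ on the boundary), but since $U_k$ is uniform and independent of $\F^{X,U}_{k-1}$ this is harmless; the paper makes the same silent identification. Second, the independence of $U_{1:\ell}$ from $\F^X_0$ is not quite lemma~\ref{innovation} itself but the iterated consequence recorded just before the definition of $X^T$ in section~\ref{S1.3}; your use of it is correct.
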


\begin{proof}
For every $n\in \z$, let $P_n=p(\cdot|X^\passe_{n})$. Thanks to the stationarity of the process $X$, $P_n$ is the conditional law of $X_{n+1}$ given $\F^X_n$ and the priming condition ensures that the support of $P_n$ is $A$ almost surely.

Let $\varepsilon \in ]0,1[$ and $\ell >0$. For any fixed $x \in A^\ell$ let us construct by induction Borel sets $B_1^{x},...,B_\ell^{x}$ of $S$ with positive measure such that for every $m\in \{1,...,\ell\}$,
$$\P(C_m)\geq\Big(1-\frac{\varepsilon}{\ell}\Big)~ \mu(B^x_m) ~\P(C_{m-1})>0, $$
where $C_0=\Omega$ and for every $m \in \{1,...,\ell\}$,
$$C_m=\big\{X_{1:m}=x_{1:m}~;~U_{1:m} \in B^{x}_1\times ...\times B^{x}_m\big\}.$$
Let $m\in \{1,...,\ell\}$. Assume that $B_1^{x},...,B_{m-1}^{x}$ are constructed verifying the induction hypothesis. Since
$\P[C_{m-1}]>0,$
one gets, thanks to the priming condition,
$$\P\big[P_{m-1}(x_m)=0~\big|~C_{m-1}\big]=0.$$
Therefore one can choose a real number $q \in ]0,1]$ such that
$$\P\big[P_{m-1}(x_m)\leq q~\big|~C_{m-1}\big]<\frac{\varepsilon}{\ell}.$$

Set $U_m=(U_{m,1},...,U_{m,N})$. Since $X_m=g(U_m,P_{m-1})$ almost surely, one gets up to negligible events,
\begin{eqnarray*}
\{X_m=x_m\}&\supset&\{X_m=x_m~;~P_{m-1}(x_m)>q  \}\\
&=&\big\{g(U_m,P_{m-1})=x_m~;~P_{m-1}(x_m)>q  \big\}\\
&=&\left\{\frac{U_{m,x_m}}{P_{m-1}(x_m)}=\min_{k \in A}\frac{U_{m,k}}{P_{m-1}(k)}~;~P_{m-1}(x_m)>q  \right\}\\
&\supset&\left\{\frac{U_{m,x_m}}{q}\leq \min_{k \ne x_m}U_{m,k}~;~P_{m-1}(x_m)>q  \right\}.\\
\end{eqnarray*}
Set $$B^{x}_m=\Big\{(y_1,...,y_N)\in S~;~\frac{y_{x_m}}{q}\leq \min_{k\ne x_m} y_k\Big\}.$$
Then $\mu(B^{x}_m)>0$ and $$\big\{X_m=x_m~;~U_m\in B^x_m\big\}\supset\big\{U_m\in B^x_m~;~ P_{m-1}(x_m)>q\big\}.$$
Since $$C_m=\{X_m=x_m~;~U_m\in B^x_m\}\cap C_{m-1},$$ the independence of $U_m$ and $\F^{X,U}_{m-1}$ and the choice of $q$ yield
\begin{eqnarray*}
\P[C_m]&\geq&\P\big[U_m\in B^{x}_m~;~P_{m-1}(x_m)> q~;~C_{m-1}\big]\\
&=&\mu(B^{x}_m)\P\big[P_{m-1}(x_m)> q~;~C_{m-1}\big]\\
&\geq&\mu(B^{x}_m)\left(1-\frac{\varepsilon}{\ell}\right)~\P[C_{m-1}].
\end{eqnarray*}
Therefore $\P[C_m]>0$.

By reducing the Borel set $B_\ell^{x}$ at the last step of the induction, one can make the measure $\mu^{\otimes \ell}[B_1^x\times...\times B_\ell^x]$ independent of $x \in A^\ell$. Denote by $\beta_\ell$ this measure, then set $B_x=B_1^x\times...\times B_\ell^x$. One gets
\begin{eqnarray*}
\P\big[X_{1:\ell}=x~\big| U_{1:\ell}\in B_x    \big]&=&\frac{\P\big[X_{1:\ell}=x ~;~ U_{1:\ell}\in B_x\big]}{\P\big[U_{1:\ell}\in B_x    \big]}.
\end{eqnarray*}
By independence,
$$
\P[X_{1:\ell}=x,\,U_{1:\ell}\in B_x]\ge\prod_{k=1}^\ell\mu(B_k^x)\left(1-\frac{\varepsilon}{\ell}\right),
\quad\text{ and }\quad
\P[U_{1:\ell}\in B_x]=\prod_{k=1}^\ell\mu(B_k^x),
$$
hence
$$
\P[X_{1:\ell}=x\sach U_{1:\ell}\in B_x]\ge\left(1-\frac{\varepsilon}{\ell}\right)^\ell\geq 1-\varepsilon,
$$
which ends the proof.\hfill $\square$ \end{proof}

\subsection{Approximation until a given time}
\label{S2.3}

Choose $\varepsilon>0$ and $\ell\ge1$ such that $\sum_{n\ge \ell} \eta_n \le \varepsilon$, let $J=[s,t]$ be an interval of integers such that $t-s+1=\ell$.

Then, let $Y$ be a random variable taking values in $A^\ell$, independent of $(X_n,U_n)_{n\in \z}$ and distributed like $X_J$.\\

Lemma~\ref{lemme amorce} provides a real number $\beta_\ell$ and Borel sets $(B_x)_{x \in A^\ell}$, such that
$$\P\big[X_{J}=Y~\big|~U_{J}\in B_{Y}\big] \geq 1-\varepsilon ~\text{ and }~ \P\big[U_{J}\in B_{Y}\big] = \beta_\ell.$$

Using $Y$ and the governing sequence $(U_n)_{n \geq t+1}$, let us construct random variables $(X'_n)_{n \geq s}$ by taking $X'_{J}=Y$ and for every $n> t$
$$X'_n=g(U_n,P'_{n-1}) \text{ where } P'_{n-1}= p(\cdot|X'_{s:n-1}).$$

The random variable $Y$ is useful in the proof of our following result.
\begin{lemme}\label{X tilde a meme loi que X}
For every $n \geq s$, the law of $X'_{s:n}$ is the law of $X_{s:n}$.
\end{lemme}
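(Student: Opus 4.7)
I would prove the lemma by induction on $n \geq s$, splitting into the base range $n \in \{s,\ldots,t\}$ where $X'_{s:n}$ is a prefix of $Y$, and the recursive range $n > t$ where $X'_n$ is produced via $g$.

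For $s \leq n \leq t$, the assertion is immediate: $X'_{s:n}$ is simply the initial segment $Y_{s:n}$ of $Y$, and $Y$ was constructed to have the law of $X_J = X_{s:t}$; taking the marginal on the first $n-s+1$ coordinates gives the law of $X_{s:n}$.

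For the inductive step, assume $n \geq t$ and that $X'_{s:n}$ and $X_{s:n}$ have the same law, and let me show this persists at $n+1$. The key ingredient is to rewrite the two conditional laws in a matching form. On the $X$ side, for any $x \in A^{n-s+1}$, stationarity gives
\begin{equation*}
\P\big[X_{n+1} = a \sach X_{s:n} = x\big] = \P\big[X_0 = a \sach X_{-(n-s+1):-1} = x\big] = p(a|x),
\end{equation*}
so the conditional law of $X_{n+1}$ given $X_{s:n}$ is $p(\cdot|X_{s:n})$. On the $X'$ side, by construction $X'_{n+1} = g(U_{n+1}, p(\cdot|X'_{s:n}))$; because $Y$ is independent of $(U_n)_{n \in \z}$ and $X'_{s:n}$ is a deterministic function of $Y$ and $(U_{t+1},\ldots,U_n)$, the variable $U_{n+1}$ is independent of $X'_{s:n}$. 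Conditioning on $X'_{s:n} = x$ and applying the lemma stating that $g(U,p)$ has distribution $p$ when $U \sim \mu$, we obtain
\begin{equation*}
\P\big[X'_{n+1} = a \sach X'_{s:n} = x\big] = p(a|x).
\end{equation*}

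Thus the Markov kernels from $X_{s:n}$ to $X_{n+1}$ and from $X'_{s:n}$ to $X'_{n+1}$ coincide. Combining this with the induction hypothesis that $X'_{s:n}$ and $X_{s:n}$ share the same law yields that $X'_{s:n+1}$ and $X_{s:n+1}$ have the same law, completing the induction. The only conceptual point to notice (not really an obstacle, but the pivot of the argument) is the stationarity identity $\P[X_{n+1} = a | X_{s:n}=x] = p(a|x)$, which lets the finite-memory recursion defining $X'$ be matched exactly against the true conditional distribution of $X$ given a finite window.
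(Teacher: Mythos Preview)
Your proof is correct and follows essentially the same approach as the paper's: both argue by induction, using that $X'_J=Y$ has the law of $X_J$ for the base case and that the two processes share the same one-step conditional law $p(\cdot|x)$ for the inductive step. You simply spell out in more detail why $\P[X'_{n+1}=a\mid X'_{s:n}=x]=p(a|x)$ (independence of $U_{n+1}$ from $X'_{s:n}$ together with the lemma on the distribution of $g(U,p)$) and why $\P[X_{n+1}=a\mid X_{s:n}=x]=p(a|x)$ (stationarity), whereas the paper states the key equality directly and leaves these verifications implicit.
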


\begin{proof}
For every $n\geq t+1$, ~$y \in A^{n-s}$ and all $x\in A$,
$$\P\big[X'_n=x~\big|~X'_{s:n-1}=y\big]~=~p(x|y)~=~\P\big[X_n=x~\big|~X_{s:n-1}=y\big].$$
Since the law of $X'_{J}=Y$ is the same as the law of $X_{J}$, the result follows by induction.
\hfill $\square$ \end{proof}

\begin{lemme}\label{lemme maj erreur partie 2} One has
$\P\big[X'\ne X\text{ {\rm on }}[s,+\infty[~\big|~U_{J}\in B_{Y}\big]\leq3\varepsilon.$
\end{lemme}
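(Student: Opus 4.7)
The plan is to split $\{X'\neq X\text{ on }[s,+\infty)\}$ into the ``initialisation failure'' $\{X'_J\neq X_J\}$ and, for each $n>t$, the disjoint event $E_n=\{X_{s:n-1}=X'_{s:n-1},\ X_n\neq X'_n\}$ of ``first disagreement at time $n$''. Since $X'_J=Y$, the priming lemma immediately yields $\P[X'_J\neq X_J \mid U_J\in B_Y]\leq\varepsilon$, which accounts for one of the three $\varepsilon$'s.

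For each $n>t$, on the coincidence event $\{X_{s:n-1}=X'_{s:n-1}\}$ we have $P'_{n-1}=p(\cdot|X'_{s:n-1})=p(\cdot|X_{s:n-1})$ and $X^\passe_{n-1}=X^\passe_{s-1}X'_{s:n-1}$. Since $U_n$ is uniform on $S$ and independent of $\F^{X,Y,U}_{n-1}$, Proposition~\ref{prop maj erreur} applied to the random probabilities $P_{n-1}$ and $P'_{n-1}$, followed by bounding the coincidence indicator by $1$, yields
$$\P[E_n,\ U_J\in B_Y]\ \leq\ 2\,\E\Big[\|p(\cdot|X^\passe_{s-1}X'_{s:n-1})-p(\cdot|X'_{s:n-1})\|\,\mathbf{1}_{U_J\in B_Y}\Big].$$

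The crucial step is then a clean factorisation of this expectation. Because $(U_n)_{n\in\z}$ is i.i.d.\ uniform on $S$ and each $U_{n+1}$ is independent of $\F^{X,U}_n$ (lemma~\ref{innovation}), while $Y$ is independent of $(X,U,W)$, the four objects $X^\passe_{s-1}$, $U_J$, $U_{t+1:\infty}$ and $Y$ are mutually independent. Consequently $X'_{s:n-1}$, being a deterministic function of $(Y,U_{t+1:n-1})$, is independent of the pair $(X^\passe_{s-1},U_J)$, and for every $y\in A^\ell$ we have $\P[U_J\in B_y]=\beta_\ell$, so $\P[U_J\in B_Y\mid Y]=\beta_\ell$ a.s. Fubini therefore factorises the above expectation as
$$\beta_\ell\,\E\Big[\|p(\cdot|X^\passe_{s-1}X'_{s:n-1})-p(\cdot|X'_{s:n-1})\|\Big].$$
By lemma~\ref{X tilde a meme loi que X}, $X'_{s:n-1}$ has the law of $X_{s:n-1}$ and is independent of $X^\passe_{s-1}$, so stationarity identifies the remaining expectation with $\eta_{n-s}$. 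Dividing by $\P[U_J\in B_Y]=\beta_\ell$ gives $\P[E_n\mid U_J\in B_Y]\leq 2\eta_{n-s}$. Summing over $n>t$ and using $n-s\geq\ell$ together with $\sum_{k\geq\ell}\eta_k\leq\varepsilon$ yields a total contribution of $2\varepsilon$; combined with the priming bound this gives the claimed $3\varepsilon$.

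The main obstacle is the mutual-independence argument in the third paragraph: although $U_J$ and $X_J$ are deterministically linked by $X_k=g(U_k,P_{k-1})$, the governing-sequence construction forces $U_J$ to be marginally uniform on $S^\ell$ and independent of the far-past $X^\passe_{s-1}$. Combined with the independence of $Y$, this is precisely what makes the total variation gap collapse onto the average influence $\eta_{n-s}$ after conditioning on $U_J\in B_Y$, so that the conditioning cost $1/\beta_\ell$ is cancelled exactly by the $\beta_\ell$ produced by the factorisation.
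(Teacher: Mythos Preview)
Your argument is correct and follows essentially the same route as the paper's proof: apply Proposition~\ref{prop maj erreur} conditionally on $\F^{X,U}_{n-1}\vee\sigma(Y)$, replace $X_{s:n-1}$ by $X'_{s:n-1}$ on the coincidence event, drop the coincidence indicator, then use the mutual independence of $X^\passe_{s-1}$, $U_J$, $(Y,U_{t+1:n-1})$ together with lemma~\ref{X tilde a meme loi que X} to extract the factor $\beta_\ell$ and recognise $\eta_{n-s}$. Your first-disagreement decomposition $\{X'_J\neq X_J\}\cup\bigcup_{n>t}E_n$ is just the disjoint form of the paper's telescoping inequality $\P[X'_{s:n}=X_{s:n}\mid U_J\in B_Y]\ge\P[X'_{s:n-1}=X_{s:n-1}\mid U_J\in B_Y]-2\eta_{n-s}$, and the two presentations are equivalent.
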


\begin{proof}
Since $X_n=g(U_n,P_{n-1})$ and $X'_n=g(U_n,P'_{n-1})$, proposition~\ref{prop maj erreur} yields for $n>t$,
$$\P\big[X'_n\ne X_n~\big|~ \F^{X,U}_{n-1}\vee\sigma(Y)\big]\leq 2\|P'_{n-1}- P_{n-1}\|.$$
Let $$(\star)=\P\big[X'_n\ne X_n~;~X'_{s:n-1}=X_{s:n-1}~;~U_{J}\in B_{Y}\big].$$
Since $$\big\{X'_{s:n-1}=X_{s:n-1}~;~U_{J}\in B_{Y}\big\} \in \F^{X,U}_{n-1}\vee\sigma(Y),$$ one gets
$$\begin{array}{rl}
(\star)&\leq \E\big[2\|P_{n-1}- P'_{n-1}\|~  \mathbf{1}_{\{X'_{s:n-1}=X_{s:n-1}\}}~\mathbf{1}_{\{U_{J}\in B_{Y}\}}\big]\\
& \\
&=2\displaystyle\sum_{y\in A^{\ell}\atop z\in A^{n-t-1}}\E\big[\|p(\cdot|X^{\passe}_{s-1}yz)- p(\cdot|yz)\|~\mathbf{1}_{\{yz=X'_{s:n-1}=X_{s:n-1}\}}~ \mathbf{1}_{\{U_{J}\in B_{y}\}}\big]\\
&\leq 2\displaystyle\sum_{y,z}\ \E\big[\|p(\cdot|X^{\passe}_{s-1}yz)- p(\cdot|yz)\|~\mathbf{1}_{\{U_{J}\in B_{y}\}}~\mathbf{1}_{\{yz=X'_{s:n-1}\}}\big],\\
&=2\displaystyle\sum_{y,z}\ \E\big[\|p(\cdot|X^{\passe}_{s-1}yz)-  p(\cdot|yz)\| \big]~ \mu^{\otimes \ell_k}(B_{y}) ~\P\big[X'_{s:n-1}=yz\big]\\
&=2\beta_{\ell} \displaystyle\sum_{x\in A^{n-s}} \E\big[\|p(\cdot|X^{\passe}_{s-1}x)-  p(\cdot|x)\| \big]~ \P\big[X_{s:n-1}=x\big]\\
&=2\beta_{\ell} \eta_{n-s},
\end{array}$$
where the last three equations stem from the independence of $X^{\passe}_{s-1}$, $U_{J}$, $U_{t+1:n-1}$ and $Y$, from lemma~\ref{X tilde a meme loi que X} and from the definition of $\eta_n$.
Hence,
$$\P\big[X'_n\ne X_n~;~X'_{s:n-1}=X_{s:n-1}~\big|~U_{J}\in B_{Y}\big]\leq 2\eta_{n-s},$$
therefore,
$$\P\big[X'_{s:n}=X_{s:n}~|~U_{J}\in B_{Y}\big]\geq \P\big[X'_{s:n-1}=X_{s:n-1}~|~U_{J}\in B_{Y}\big] - 2\eta_{n-s}.$$
By induction, one gets
$$\P\big[X'_{s:n}=X_{s:n}~\big|~U_{J}\in B_{Y}\big]\geq \P\big[X'_{J}=X_{J}~\big|~U_{J}\in B_{Y}\big]-2\displaystyle\sum_{m=\ell}^{n-s}\eta_m.
$$
Since $X'_{J}=Y$ and $\P[X_J=Y ~|~U_J \in B_Y] \geq 1-\varepsilon$, this yields
$$
\P\big[X'=X\text{ on }[s,+\infty[~\big|~U_{J}\in B_{Y}\big]\ge1-\varepsilon-2\sum_{m=\ell}^{\infty}\eta_m \geq 1-3\varepsilon,$$
which ends the proof. \hfill $\square$ \end{proof}

\subsection{Successive approximations and end of the proof of theorem~\ref{theo e}}
\label{S2.4}

Our next step in the proof of theorem~\ref{theo e} is to approach the random variable $X_0$ by measurable functions of the governing sequence. To this aim, we group the innovations by intervals of times.
For every $m>0$ one chooses $L_m$ such that $$\sum_{n\geq L_m} \eta_n \leq 1/m.$$

For each $m$, lemma~\ref{lemme amorce} (the priming lemma) applied to $\ell=L_m$ and $\varepsilon=1/m$ provides a real number $\beta_{L_m}>0$ and Borel sets $(B_x)_{x\in A^{L_m}}$ of $S^{L_m}$ with measure $\beta_{L_m}$ such that $$\P \big[X_{1:L_m}=x~|~U_{1:L_m}\in B_x\big]\geq 1-1/m.$$
Choose an integer $M_m\geq 1/\beta_{L_m}$.
Split $\z^{*}_-$ into $M_1$ intervals of length $L_1$, $M_2$ intervals of length $L_2$, ... More precisely set, for every $n\geq 1$,
$$
\ell_n=L_m \ \text{ if } \ \ M_1+\cdots+M_{m-1}<n\leq M_1+\cdots+M_m\\
$$
and
$$
\varepsilon_n=1/m \ \text{ if }\ \  M_1+\cdots+M_{m-1}<n\leq M_1+\cdots+M_m\\
$$

Therefore, for every $k\geq0$ one gets
$$
\sum_{n\geq \ell_k}\eta_n \leq \varepsilon_k.
$$
At last, for every $k\geq0$, set  $$t_k=-\sum_{1\leq n\leq k}\ell_n$$ that is to say $t_0=0$ and $t_k=t_{k-1}-\ell_{k}$ for $k \geq 1$.
Define, for $k \geq 0$,  the interval of integers $$J_k=[t_k,t_k+\ell_k-1]=[t_k,t_{k-1}-1] \text{ and } X_{J_k}=X_{t_k:t_{k-1}-1}.$$

\eject
\newpage

\begin{figure}[h]
\begin{center}
\includegraphics[width=.95\textwidth]{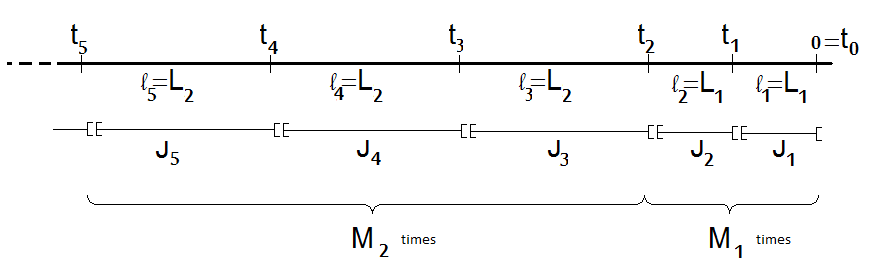}
\caption{Splitting $\z^*_-$ in intervals of times.}
\end{center}
\label{Graphique innovations 3}
\end{figure}

Let $Y=(Y_k)_{k\geq1}$ be a sequence of random variables, independent of $(X_n,U_n)_{n\in \z}$ and such that for every $k\geq 1$, the law of $Y_k$ is the law of $X_{1:\ell_k}$.

For every $k\geq 0$, let us use the construction of section~\ref{S2.3}: set $X^k_{J_k}=Y_k$, then for every $n\geq t_k+\ell_k=t_{k-1}$
$$X^k_n=g(U_n,P^k_{n-1}) \text{ where } P^k_{n-1}= p(\cdot|X^k_{t_k:n-1}).$$

Therefore lemma~\ref{lemme maj erreur partie 2} yields the inequality
 $$\P\big[X_{t_k:0}\ne X^k_{t_k:0}~\big|~U_{J_k}\in B_{Y_k}\big]\leq3\varepsilon_k,$$

and $$\P\big[X_0^k \ne X_0 ~\big|~ U_{J_k} \in B_{Y_k} \big] \to 0\text{ when } k \to +\infty.$$

Moreover each event $\big\{U_{J_k}\in B_{Y_k}\big\}$ is independent of the others (indeed they are functions of random variables $U_k$ for disjoint sets of indices $k$) and
$$\sum_{k\geq 1} \P\big[U_{J_k}\in B_{Y_k} \big]=\sum_{k\geq 1} \beta_{\ell_k} =\sum_{m=1}^{+\infty}M_m \beta_{L_m} =+\infty$$
since $M_m \beta_{L_m}\ge 1$  by choice of $M_m$.

Lemma~\ref{lem2.1}, stated below, provides a deterministic increasing function $\theta$ such that
$$\sum_{k\geq 1} \P\big[X_0^{\theta(k)}\ne X_0 ~;~ U_{J_{\theta(k)}} \in B_{Y_{\theta(k)}} \big] < +\infty$$
$$\text{and } \sum_{k\geq 1} \P\big[ U_{J_{\theta(k)}} \in B_{Y_{\theta(k)}}\big] = +\infty.$$
Using Borel-Cantelli's lemma, one deduces that\\
\begin{itemize}
\item $\big\{X_0^{\theta(k)}\ne X_0\big\} \cap \big\{ U_{J_{\theta(k)}} \in B_{Y_{\theta(k)}} \big\} \text{ is realized for a finite number of }k\text{ only, a.s.}$
\item $\big\{ U_{J_{\theta(k)}} \in B_{Y_{\theta(k)}} \big\} \text{ is realized for an infinite number of } k \text{ a.s.}$.
\end{itemize}
Thus, for every $a \in A$,
$$\{X_0=a\}=\limsup_{k \to \infty} \big\{ U_{J_{\theta(k)}} \in B_{Y_{\theta(k)}} \big\}\cap \big\{X_0^{\theta(k)}=a\big\}.$$
Therefore, $\{X_0=a\}$ belongs to $\F_0^U \vee \sigma(Y)$. Since the sequence $Y=(Y_k)_{k \geq 0}$ is independent of $\F_0^{U,X}$, one gets
$$\P\big[X_0=a\big|\F_0^U \big]=\P\big[X_0=a~\big|~\F_0^U\vee\sigma(Y) \big]=\mathbf{1}_{\{X_0=a\}} \text{ a.s.},$$
therefore $\{X_0=a\}\in \F_0^U$. By stationarity of the process $(X,U)$, one gets the inclusion of the filtration $\F^X$ into the filtration $\F^U$. Therefore lemma~\ref{immersion} yields that $\F^X$ is immersed in $\F^U$, which ends the proof. $\hfill \square~$

\begin{lemme}\label{lem2.1}
Let $(a_n)_{n\ge0}$ and $(b_n)_{n\ge0}$ denote two bounded sequences of nonnegative real numbers such that the series $\displaystyle\sum_nb_n$ diverges and such that $a_n\ll b_n$. Then there exists an increasing function $\theta : \N \to \N$ such that the series
$\displaystyle\sum_{n} a_{\theta(n)}$ converges and the series $\displaystyle\sum_{n} b_{\theta(n)}$ diverges.
\end{lemme}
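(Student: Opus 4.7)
\textbf{Proof plan for Lemma~\ref{lem2.1}.} The plan is to build $\theta$ by carving $\N$ into disjoint blocks of consecutive integers and selecting, inside each block, just enough indices so that the sum of $b_{\theta(n)}$'s over that block is of order $1$ while the corresponding sum of $a_{\theta(n)}$'s is of order $2^{-k}$. Summing over $k$ then yields the desired dichotomy.

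First, I would use the hypothesis $a_n\ll b_n$ to choose an increasing sequence of integers $(M_k)_{k\ge 1}$ such that $a_n\le b_n/2^k$ for every $n\ge M_k$. Next, using the divergence of $\sum_n b_n$, I would recursively define an increasing sequence $(N_k)_{k\ge 1}$ with $N_k\ge M_k$ and $\sum_{n=N_k}^{N_{k+1}-1}b_n\ge 1$: given $N_k$, take $N_{k+1}$ to be the smallest integer exceeding $N_k$ which is both $\ge M_{k+1}$ and large enough so that the block sum exceeds $1$. Such an integer exists because the tail series $\sum_{n\ge N_k}b_n$ diverges.

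Within each block $\{N_k,N_k+1,\ldots,N_{k+1}-1\}$ I would then select the smallest initial segment $\{N_k,N_k+1,\ldots,N_k+r_k\}$ for which $\sum_{j=N_k}^{N_k+r_k}b_j\ge 1$; by the construction of $N_{k+1}$ such an $r_k$ exists and satisfies $N_k+r_k< N_{k+1}$, so the selected segments are pairwise disjoint. Letting $B=\sup_n b_n<\infty$ (here I use that $(b_n)$ is bounded), the minimality of $r_k$ gives $\sum_{j=N_k}^{N_k+r_k}b_j\le 1+B$; combined with $a_j\le b_j/2^k$ on the segment (since $N_k\ge M_k$), this yields $\sum_{j=N_k}^{N_k+r_k}a_j\le (1+B)/2^k$.

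Finally, I would define $\theta:\N\to\N$ as the increasing enumeration of the union of these selected segments. Strict monotonicity of $\theta$ follows from disjointness of the segments. Summing over $k$,
\[
\sum_{n}a_{\theta(n)}=\sum_{k\ge 1}\sum_{j=N_k}^{N_k+r_k}a_j\le (1+B)\sum_{k\ge 1}2^{-k}<+\infty,
\qquad
\sum_{n}b_{\theta(n)}=\sum_{k\ge 1}\sum_{j=N_k}^{N_k+r_k}b_j\ge \sum_{k\ge 1}1=+\infty,
\]
which proves the lemma. The only delicate point is the simultaneous recursion defining $(N_k)$: one must enlarge $M_k$ enough to collect sufficient $b$-mass in each block without losing the bound $a_n\le b_n/2^k$; boundedness of $(b_n)$ is then what prevents the $a$-sum in one block from overshooting its $2^{-k}$ budget.
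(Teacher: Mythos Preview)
The paper does not actually supply a proof of Lemma~\ref{lem2.1}; it is stated at the end of section~\ref{S2.4} as an elementary fact and then used, with no argument given anywhere in the text. So there is nothing to compare your argument against.

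That said, your proof is correct. The construction does exactly what is needed: the choice $N_k\ge M_k$ secures the bound $a_j\le 2^{-k}b_j$ on every selected index of the $k$-th block, the minimality of $r_k$ together with the boundedness of $(b_n)$ caps the $b$-mass of each selected segment by $1+B$, and disjointness of the segments (guaranteed by $N_k+r_k<N_{k+1}$) makes $\theta$ a genuine increasing enumeration. The two displayed series estimates then follow immediately. The only cosmetic gap is that you do not explicitly initialize the recursion (e.g.\ $N_1=M_1$), but this is harmless.
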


\section{Examples}\label{S3}

In this section we study some examples showing the advantages and the limitations of our results.
\begin{itemize}
\item Our first example (section~\ref{S3.1}) is a chain with memory of variable length which fulfills the hypotheses of theorem~\ref{theo e} but not those of theorem~\ref{theo gamma} nor theorem~\ref{theo delta}. Its natural filtration is standard.

\item Our second example (section~\ref{S3.2}) is derived from the well known $[T,T^{-1}]$ transformation. It provides a stationary process with values in a finite space, whose natural filtration is not standard. This example does not fulfills any of the two conditions of theorem~\ref{theo e} (namely the priming condition and the summability of the gaps).

\item Our third example (section~\ref{S3.3}) is a slight adaptation of the second one, where the filtration of the process is still non-standard although the priming condition is fulfilled.

\item Our fourth and last example (section~\ref{S3.4}) is another adaptation of the second example in which the filtration is standard although the condition of summability of the gaps is not fulfilled and the related conditional probabilities are close to those of the second example.
\end{itemize}
\subsection{First example : Parity of the number of $1$ in a row}\label{S3.1}

This example provides a setting where one proves standardness using theorem~\ref{theo e}.

Let $(X_n)_{n \in \z}$ be a stationary process taking values in $\{0,1\}$ such that
$$ \P[X_0=0 ~|~ \F_{-1}^X]=\frac{1}{3}+\frac{1}{3}\mathbf{1}_{\{T \text{ is even or }T=-\infty\}} ~~~~\text{where}~~~~ T:=\sup\{k<0~:~X_k=0\}.$$

The existence of such a process is ensured by proposition 2.10 in~\cite{galves2008stochastic}. A simple computation gives, for every $n \geq 0$,
$$\gamma_n=\frac{1}{2},~~ \delta_n=\frac{1}{3},~~ \alpha_n=\frac{1}{3},~~ \eta_n\leq\frac{1}{3}\P[T<-n]\leq \left(\frac{2}{3}\right)^{n+1}.$$
Therefore this process fulfills the hypotheses of theorem~\ref{theo e} (and its corollary~\ref{memoire variable}) but neither those of theorem~\ref{theo gamma} nor those of theorem~\ref{theo delta}. The filtration $\F^X$ is standard.

\subsection{Second example : Random walk in random scenery}\label{S3.2}

The following is a process whose filtration is not standard.

Let $X=(X_n)_{n \in \z}$ and $C=(C_s)_{s \in \z}$ be two independent sequences of i.i.d. random variables with uniform law on $\{-1,1\}$. Set
$$\begin{array}{l}
S_n=X_1+\cdots+X_n \text{ if } n \geq 0\\
S_n=-X_{n+1}-\cdots-X_0 \text{ if } n<0
\end{array}.$$
Therefore $S_{n+1}=S_n+X_{n+1}$ for every $n \in \z$.
Set $C_{S_n}=Y_n$. The stationary process $Z$ defined by $Z_n=(X_n,Y_n)$ for every $n\in \z$ and taking values in $A=\{-1,1\}^2$ is called random walk in random scenery.

This process is derived from the process $((X_{n+\cdot},C_{S_n+\cdot}))_{n \in\z}$ where $X_{n+\cdot}=(X_{n+m})_{m\leq0}$ is the trajectory of $X$ until time $n$ and $C_{S_n+\cdot}=(C_{S_n+s})_{s\in \z}$ is the scenery seen from $S_n$. It is easy to prove that the processes $Z$ and $(X_{n+\cdot},C_{S_n+\cdot})_{n \in \z}$ generate the same filtration. Indeed, given $(X_k,C_{S_k})$ for every $k\leq n$, one knows the trajectory $X_{n+\cdot}$ and one can deduces the increments $(S_{n}-S_k)_{k \leq n}$. Since those increments visit almost surely every integer, one can recover the scenery seen from $S_n$.

The process $((X_{n+\cdot},C_{S_n+\cdot}))_{n \in\z}$ is the most famous $[T,T^{-1}]$ process. Indeed the $[T,T^{-1}]$ transformation is the application from $\{-1,1\}^{\z_-}\times\{-1,1\}^{\z}$ into itself defined by
$$[T,T^{-1}]((x_{n})_{n \leq 0},(c_s)_{s \in \z})=((x_{n-1})_{n \leq 0},(c_{s-x_0})_{s \in \z}).$$
One checks that for every $n\leq 0$
$$(X_{n+\cdot},C_{S_n+\cdot})=[T,T^{-1}]^{-n}(X,C).$$

According to~\cite{heicklen-t}, the natural filtration of the process $((X_{n+\cdot},C_{S_n+\cdot}))_{n \in\z}$ is not standard though its asymptotic $\sigma$-field at $-\infty$ is trivial. Therefore the same holds for the natural filtration of $Z$.

Let $n \geq 0$. Let us study the probabilities $p(a|z)$ for $a \in A$ and $z \in A^n$. Note $z=(z_{-n},...,z_{-1})$, $z_k=(x_k,y_k)$ and $a=(x_0,y_0)$.

First, note that some of the events
$$\Big\{Z_{-n:-1}=z\Big\}=\Big\{X_{-n:-1}=x_{-n:-1}~;~Y_{-n:-1}=y_{-n:-1}\Big\}$$
are impossible. Indeed, by definition of the process $Y$ for $i<j$, $Y_i=Y_j$ on the event $X_{i+1}+\cdots+X_j=0$. When the event $\{Z_{-n:-1}=z\}$ is impossible, one says that $z$ is not admissible. Note that to compute $\eta_n$, one only needs to consider probabilities $p(\cdot|z)$ and $p(\cdot|wz)$ for admissible $z \in A^n$ and $w \in A^\passe$. Yet, $wz$ may be non-admissible even if $z$ and $w$ are admissible.

Assume that $z$ is admissible. Then $\P\big[X_0=x_0|Z_{-n:-1}=z\big]=1/2$. If for some $i \in \{-n,...,-1\}$, $x_{i+1}+\cdots+x_0=0$, then the conditions $Z_{-n:-1}=z$ and $X_0=x_0$ imply that $Y_0=y_i$. Otherwise, the color $Y_0$ is independent of $Z_{-n:-1}$ and $X_0$.
Thus for any admissible word $z\in A^n$,
$$p(a|z)=\left| \begin{array}{ll} 1/2~~ & \text{ if there exists } i \text{ such that }x_{i+1}+\cdots+x_0=0 \text{ and }y_0=y_i,\\
0 &\text{ if there exists } i \text{ such that }x_{i+1}+\cdots+x_0=0 \text{ and }y_0\ne y_i,\\
1/4 &\text{ otherwise.} \end{array} \right.
$$
Therefore for every $n \geq0$, $\gamma_n=1$ and $\delta_n=\alpha_n=1/2$.

Furthermore, for almost every admissible word $w=(x_n,y_n)_{n<0}$ in $A^\passe$, there exists $t<0$ such that $x_{t+1}+\cdots+x_0=0$ and the same argument gives that $$p(a|w)=\frac{1}{2}{\mathbf{1}}_{\{y_t=y_0\}}.$$

For non-admissible $w \in A^\passe$, the value of $p(a|w)$ can be chosen arbitrarily. Set  $$p(a|w)=\frac{1}{2}{\mathbf{1}}_{\{y_d=y_0\}} \text{ where } d=\sup\{t\leq-1~:~x_{t+1}+\cdots+x_0=0\}$$ if $d$ is well defined, and $p(a|w)=1/4$ otherwise.

With this convention, one gets that, for almost any admissible $w\in A^\passe$ and $z=((x_{-n},y_{-n}),\cdots,(x_{-1},y_{-1})) \in A^n$,
$$
|p(a|z)-p(a|wz)|=\left|\begin{array}{l}0\text{ if there exists }t \in [-n,-1]\text{ such that }x_t+\cdots+x_0=0\\
1/4\text{ otherwise.}
\end{array}\right.$$

Therefore, for every $n \geq0$,
\begin{eqnarray*}
\eta_n&=&\frac{1}{4}\Big(\P\big[\forall i \in \{-n,...,-1\}, X_{i+1}+\cdots+X_{-1}\geq0\big]\\
& &~~~~~~~~~~+\P\big[\forall i \in \{-n,...,-1\}, X_{i+1}+\cdots+X_{-1}\leq0\big]\Big)\\
&=&\frac{1}{2}~\P\big[\forall i \in \{-n,...,-1\}, X_{i+1}+\cdots+X_{-1}\geq0\big].
\end{eqnarray*}
One sees that $\eta_n \sim C/\sqrt{n}$ with $C \in \R^+_*$. Hence the process $Z$ does not fulfill any of the hypotheses of theorem~\ref{theo e}.\\

\subsection{Third example : Random walk in random scenery with misreading}\label{S3.3}

We construct a variant of the random walk in random scenery which fulfills the priming condition but whose natural filtration is not standard.

Construct $Z_n=(X_n,Y_n)$ as in section~\ref{S3.2}. Fix $q \in ]0,1/2[$. Let $(\xi_n)_{n \in \z}$ be a sequence of i.i.d. random variables taking values in $\{-1,1\}$, independent of $\F^Z$ and such that $\P[\xi_0=1]=1-q$. Define a process $(Z'_n)_{n \in \z}$ by
$$Z'_n=(X_n,Y_n \xi_n).$$
The process $(Z'_n)_{n \in \z}$ is a random walk in random scenery in which at each time, one misreads the color of the site $Y_n$ with probability $q$.

The processes $Z'$ and $(Z_n,\xi_n)_{n \in \z}$ generate the same filtration. Indeed, the random variables $\xi_m$ associated to the times $m<n$ where $S_m=S_n$ are independent and take the value $1$ with probability $1-q>1/2$, therefore the color $Y_n$ is the most common color among the colors $Y_m \xi_m$ seen at those times. Therefore, for almost every $z \in A^\passe$ and $a\in A$, the corresponding conditional probability $p'(a|z)$ is  equal to $q/2$ or to $(1-q)/2$ depending on these colors.

Moreover, by independent enlargement, $\F^Z$ is immersed into $\F^{Z,\xi}=\F^{Z'}$. Since $\F^Z$ is non-standard, one deduces that $\F^{Z'}$ is not standard either.

By a short calculation, one gets for every $n > 0$, $$\gamma_n=\frac{q}{1-q},\quad \delta_n=\frac{1-2q}{4}, \quad \alpha_n=1-2q.$$
Since the probabilities $p'(a|z)$ related to this process satisfy $p'(a|z)\geq q/2$, for every $a \in A$ and $z \in A^{\passe}$, the priming condition is fulfilled. The exact value of $p'(a|z)$ for $z \in A^n$ is difficult to compute, but the corresponding gaps $\eta'_n$ verify
\begin{eqnarray*}
\eta'_n&\ge&\left(\frac{1}{4}-\frac{q}{2}\right)\Big(\P\big[\forall i \in \{-l,...,-1\}, X_{i+1}+\cdots+X_{-1}\geq0\big]\\
& &~~~~~~~~~~+\P\big[\forall i \in \{-l,...,-1\}, X_{i+1}+\cdots+X_{-1}\leq0\big]\Big)\\
&=&(1-2q)\eta_n.
\end{eqnarray*}
Therefore for $q<1/2$, the sequence $(\eta'_n)_n$ is not summable, thus the process $Z'$ does not verify the condition of summability of the gaps.

\subsection{Fourth example : Random walk in renewed random scenery}\label{S3.4}

We construct another variant of the random walk in random scenery in which the natural filtration is standard although the condition of summability of the gaps of theorem~\ref{theo e} is not fulfilled.

We consider a variant of the process $(X_n,C_{S_n+\cdot})_{n \in \z}$ in which at each time $n$ the color at $0$ of the scenery seen from $S_n$ is changed with probability $q \in ]0,1/2[$.
For every $g \in \{-1,1\}^{\z}$, denote $\overline{g}\in \{-1,1\}^{\z}$ the application defined by $$\overline{g}(s)=g(s) \text{ for } s\ne 0 \text{ and } \overline{g}(0)=-g(0).$$ Let $(X_n,G_n)$ be a stationary Markov chain with values in $\{-1,1\}\times\{-1,1\}^{\z}$, with transition probabilities
$$p\big((x,g),(x',g')\big)=\left|\begin{array}{ll}(1-q)/2 &\text{if }g'=g(x'+\cdot)\\
 & \\
                                         q/2 & \text{if }g'=\overline{g}(x'+\cdot)   \end{array} \right. .$$
The random walk in renewed random scenery is the process $Z''=(Z''_n)_{n \in \z}$ defined by $Z''_n=(X_n,G_n(0))$.

The corresponding probabilities $p''(a|z)$ are close to the probabilities $p(a|z)$.
Indeed,
$$p''(a|z)=\left|\begin{array}{ll} (1-q)/2 & \text{if }p(a|z)=1/2 \\
 & \\
                                    q/2 & \text{if }p(a|z)=0  \\
 & \\
                                    1/4 &\text{if }p(a|z)=1/4\end{array} \right. .$$
Therefore the corresponding gaps verify $$\eta''_n=(1-2q)\eta_n.$$

To show that the filtration $\F^{Z''}$ is standard, one can use the following trick: instead of changing the color at $0$ of the scenery $G_n$ with probability $q$, one draws at random this color with probability $2q$. One needs a random variable $\varepsilon_n$ taking the value $1$ if this drawing occurs and $0$ otherwise, and a random variable $\kappa_n$ giving the color obtained if the drawing occurs.

To construct these random variables, consider two independent sequences of random variables $(\beta_n)_{n \in \z}$ and $(V_n)_{n \in \z}$, independent of $\F^{X,G}$ such that
\begin{itemize}
\item the $\beta_n$ are i.i.d. Bernoulli variables of parameter $(1-2q)/(1-q)$,
\item the $V_n$ are i.i.d. and uniform on $\{-1,1\}$.
\end{itemize}
Let, for every $n \in \z$,
$$\varepsilon_n=1-\beta_n \textbf{1}_{\{G_n(0)=G_{n-1}(X_n)\}} \ \ \text{ and }\ \  \kappa_n=G_n(0)\textbf{1}_{\{\varepsilon_n=1\}}+V_n \textbf{1}_{\{\varepsilon_n=0\}}.$$

Let us show that the random variables $U_n=(X_n,\varepsilon_n,\kappa_n)$ constitute a governing sequence for the process $Z''$. Given $Z''_{n-1}$ and $U_n$, one deduces $Z''_n$ thanks to the equalities
$$\begin{array}{ll}
G_n(s)=G_{n-1}(X_n+s) & \text{ if } \varepsilon_n=0 \text{ or } s \ne 0\\
G_n(0)=\kappa_n & \text{ if } \varepsilon_n=1.
\end{array}$$
It remains to check that $U_n$ is independent of the $\sigma-$field $\G_{n-1}=\F^{X,G,\beta,V}_{n-1}$ and \textit{a fortiori} of $\F^{Z'',U}_{n-1}$. Thanks to the independence of the processes $\beta$, $V$ and $(X,G)$ one gets for every $x\in \{-1,1\}$,
$$\begin{array}{l}\P\big[X_n=x~;~G_n=G_{n-1}(X_n+\cdot)\big|\G_{n-1}\big]=(1-q)/2,\\
 \\
        \P\big[X_n=x~;~G_n\ne{G}_{n-1}(X_n+\cdot)\big|\G_{n-1}\big]=q/2. \end{array}$$
Therefore, for every $c$ and $x$ in $\{-1,1\}$,
$$\begin{array}{rl}
\P[\varepsilon_n=1~;~\kappa_n=c~~;~~X_n=x ~|~ \G_{n-1}]
&=\P[\varepsilon_n=1~;~G_n(0)=c~;~ X_n=x ~|~\G_{n-1}]\\
&=(1)+(2)+(3),
\end{array}$$
with
$$
\begin{array}{l}
(1)=\P\big[\beta_n=0~;~G_n\ne{G}_{n-1}(X_n+\cdot)~;~G_{n-1}(X_n)=-c~;~ X_n=x ~|~\G_{n-1}\big],\\
 \\
(2)=\P\big[\beta_n=0~;~G_n=G_{n-1}(X_n+\cdot)~;~G_{n-1}(X_n)=c~;~ X_n=x ~|~\G_{n-1}\big],\\
 \\
(3)=\P\big[\beta=1~;~G_n\ne{G}_{n-1}(X_n+\cdot)~;~G_{n-1}(X_n)=-c~;~ X_n=x ~|~\G_{n-1}\big].
\end{array}
$$
One gets
\begin{eqnarray*}
(1)&=&\displaystyle\frac{q}{1-q}\times \P\big[G_n\ne{G}_{n-1}(X_n+\cdot)~;~G_{n-1}(x)=-c~;~ X_n=x ~|~\G_{n-1}\big]\\
&=&\displaystyle\frac{q}{2}\times\displaystyle\frac{q}{1-q}\times\P\big[G_{n-1}(x)=-c ~|~\G_{n-1}\big]\\
&=&\displaystyle\frac{q}{2}\times\displaystyle\frac{q}{1-q}\times\mathbf{1}_{\{G_{n-1}(x)=-c\}},
\end{eqnarray*}

\begin{eqnarray*}
(2)&=&\displaystyle\frac{q}{1-q}\times\P\big[G_n=G_{n-1}(X_n+\cdot)~;~G_{n-1}(x)=c~;~ X_n=x ~|~\G_{n-1}\big]\\
&=&\displaystyle\frac{1-q}{2}\times\displaystyle\frac{q}{1-q}\times\P\big[G_{n-1}(x)=c~|~\G_{n-1}\big]\\
&=&\displaystyle\frac{q}{2}\times\mathbf{1}_{\{G_{n-1}(x)=c\}},
\end{eqnarray*}
and
\begin{eqnarray*}
(3)&=&\displaystyle\frac{1-2q}{1-q}\times\P\big[G_n\ne{G}_{n-1}(X_n+\cdot)~;~G_{n-1}(x)=-c~;~ X_n=x ~|~\G_{n-1}\big]\\
&=&\displaystyle\frac{q}{2}\times\displaystyle\frac{1-2q}{1-q}\times\P\big[G_{n-1}(x)=-c ~|~\G_{n-1}\big]\\
&=&\displaystyle\frac{q}{2} \times\displaystyle\frac{1-2q}{1-q}\times\mathbf{1}_{\{G_{n-1}(x)=-c\}}.
\end{eqnarray*}
Thus, for every $c$ and $x$ in $\{-1,1\}$,
$$\P[\varepsilon_n=1~;~\kappa_n=c~~;~~X_n=x ~|~ \G_{n-1}]=\displaystyle\frac{q}{2}.$$
Moreover, by independence of $\beta_n$, $V_n$ and $\G_{n-1}$,
\begin{eqnarray*}
\P\big[\varepsilon_n=0&;& \kappa_n=c~;~ X_n=x~|~\G_{n-1}\big]\\
&=&\P\big[\beta_n=1~;~ G_n=G_{n-1}(X_n+\cdot)~;~ V_n=c~;~ X_n=x|\G_{n-1}\big]\\
&=&\frac{1-2q}{1-q}\times\frac{ 1}{2}\times\frac{1-q}{2}\\
&=&\frac{1-2q}{4}.
\end{eqnarray*}
This shows that the random variables $U_n=(X_n,\varepsilon_n,\kappa_n)$ constitute a governing sequence for the process $Z''$.

Let us show the inclusion $\F^{Z''}_n\subset \F^U_n$ for any $n \in \z$, that is to say, that the sequence $(U_k)_{k \leq n}$ is sufficient to recover the scenery $G_n$ seen from $S_n$. The variables $(X_k)_{k\leq n}$ determine the increments $(S_n-S_k)_{k\leq n}$ and for every $s \in \z$, $S_n-S_k=s$ for an infinite number of times $k \leq n$. Among those times, there is an infinite number of times such that $\varepsilon_k=1$. The value of $\kappa_k$ at the last time $k \leq n$ such that $S_n-S_k=s$ and $\varepsilon_k=1$ is equal to $G_n(s)$. Therefore $\F^{G} \subset \F^U$, and since $\F^X\subset\F^U$, one gets  $\F^{Z''} \subset \F^U$.
Finally, lemma~\ref{immersion} yields that $\F^{Z''}$ is immersed in $\F^U$, therefore the natural filtration of the process $(Z''_n)_{n \in \z}$ is standard.

\section{Proofs of auxiliary facts}
\label{ss.profs}
\subsection{Inequalities involving $\alpha_n, \delta_n, \gamma_n$ and $\eta_n$}\label{S4.1}


{\bf To prove that $ \delta_n\le\gamma_n$ for every $n \geq 0$,} consider $x$ and $y$
in $A^{\passe}$ and $z \in A^n$. Then,
\begin{align*}
\|p(\cdot|xz)-p(\cdot|yz)\|&=\sum_{a\in A}\big[p(a|xz)-p(a|yz)\big]_+\\
&=\sum_{a\in A}p(a|xz)\left(1-\frac{p(a|yz)}{p(a|xz)}\right)_+
\le\sum_{a\in A}p(a|xz)\gamma_n=\gamma_n.
\end{align*}
Taking the supremum over $x$, $y$ and $z$, one gets $\delta_n\leq \gamma_n$.
~\\


{\bf
To prove that $\eta_n \le \delta_n$ for every $n \geq 0$,
} consider for every $z \in A^{n}$,  the law $Q_z$ of $X^{\passe}_{-n-1}$ conditionally on $X_{-n:-1}=z$.
Then,
$$
p(\cdot|z)=\int_{A^{\passe}}p(\cdot|yz)Q_z({\rm d}y).
$$
Thus, for every $x$ in $A^{\passe}$ and $z$ in $A^{n}$,
\begin{eqnarray*}
\|p(\cdot|z)-p(\cdot|x z)\|
&=&\Big\|\int_{y\in A^\passe}\big( p(\cdot|yz)-p(\cdot|xz)\big)Q_z({\rm d}y)\Big\|
\\
&\le&\int_{y\in A^\passe}\big\| p(\cdot|yz)-p(\cdot|xz)\big\|Q_z({\rm d}y)
\\
&\le&\sup_{y\in A^\passe}\big\| p(\cdot|yz)-p(\cdot|xz)\big\|
\le\delta_n.
\end{eqnarray*}
For every $z$ in $A^{n}$,
$\big\|p(\cdot|z)-p(\cdot|X^{\passe}_{-n-1}z)\big\|\leq \delta_n$ almost surely. Taking the expectation and the average over $z$, one gets $\eta_n \le \delta_n$.

~\\

{\bf
To prove that $\delta_n \le \alpha_n$ for every $n \geq 0$,
}consider $z \in A^n$ and $y,y'\in A^\passe$. Then,
\begin{eqnarray*}
\|p(\cdot|yz)-p(\cdot|y'z)\|&=&\sum_{a \in A}\big|p(a|yz)-p(a|y'z)\big|_+\\
&=&\sum_{a \in A}\Big(p(a|yz)-\min\big(p(a|yz),,p(a|y'z)\big)\Big)\\
&\leq&1-\inf_{z\in A^n}\sum_{a \in A}\inf\big\{p(a|yz)~:~y \in A^\passe\big\}\\
&=&\alpha_n.
\end{eqnarray*}
This ends the proof.

\subsection{Proof of lemma~\ref{immersion}}\label{S4.2}

Assume that $X$ is a process valued in a measurable space $(E,\mathfrak{E})$ and that $U$ is a governing and generating process of $X$.
Let $n\in\z$.
Since $U$ governs $X$, there exists a measurable function $\psi_n$ such that $X_{n+1}=\psi_n(U_{n+1},X^{\passe}_n)$ (axiom (ii)).
Let $B\in \mathfrak{E}$. We try to estimate
$$
\rho=\P[X_{n+1}\in B\sach\F^{X,U}_n]=\P[\psi_n(U_{n+1},X^{\passe}_n)\in B\sach\F^{X,U}_n].
$$
Since $U$ governs $X$,
$U_{n+1}$ and $\F^{X,U}_n$ are independent (axiom (i)) hence $\rho$ is a function of $X^{\passe}_n$ only, that is,
$$
\rho=\P[\psi(U_{n+1},X^{\passe}_n)\in B\sach\F^{X}_n]=\P[X_{n+1}\in B\sach\F^{X}_n].
$$
Hence $\F^X_{n+1}$ is independent of $\F^U_n$ conditionally on $\F^X_n$. This shows that $\F^X$ is immersed in $\F^U$.

~\\
\subsection{Proof of corollary~\ref{memoire variable}}\label{S4.3}
Assume that $X$ is a chain with memory of variable length and $Y$ be an independent copy of $X$. As
$
p(\cdot|X^\passe_{-n-1}Y_{-n:-1})=p(\cdot|Y_{-n:-1}).
$
on the event $\{\ell(Y^\passe_{-1}) \le n \}$,
$$\|p(\cdot|X^\passe_{-n-1}Y_{-n:-1})-p(\cdot|Y_{-n:-1})\|\le\un_{\{\ell(Y^\passe_{-1})\ge n+1\}}.$$
Taking expectations, one gets $\eta_n \le\P[\ell(Y^\passe_{-1})\ge n+1]$, hence
$$
\sum_{n\ge0} \eta_n \le\E[\ell(Y^\passe_{-1})]<+\infty.
$$
This ends the proof.

\subsection{Proof that $\mathcal{H}(\gamma)$ provides a positive lower bound for $p(a|x)$}\label{S4.4}

We show that $\mathcal{H}(\gamma)$ implies the existence of $c>0$ such that $p(a|x)\ge c$ for every $x$ in $A^{\passe}$ and $a$ such that $\P[X_0=a]>0$.\\
Assume that $\mathcal{H}(\gamma)$, that is $$\sum_{k=0}^{+\infty} \prod_{n=0}^{k}(1-\gamma_n)=+\infty.$$
Therefore, $1-\gamma_0>0$. By definition of $\gamma_0$, for every $a \in A$, $x,y \in A^\passe$,
$$p(a|x)\geq(1-\gamma_0)p(a|y).$$ Integrating this inequality with respect to the law of $X^\passe$, one gets
$$p(a|x)\geq(1-\gamma_0)\P[X_0=a].$$ Since $A$ is finite, this ends the proof.


\bibliographystyle{plain}
\bibliography{bibliFr}

\end{document}